\documentclass[a4j,10pt]{article}
\usepackage{amsmath}
\usepackage{amssymb}
\usepackage{amsthm}
\usepackage{graphicx}
\usepackage{amscd}
\usepackage{enumitem}
\usepackage{mathtools}
\usepackage[all]{xy}
\usepackage{comment}
\numberwithin{equation}{section}
\theoremstyle{definition}
\newtheorem{example}{Example}[section]
\newtheorem{definition}[example]{Definition}
\theoremstyle{plain}

\newtheorem{cor}[example]{Corollary}
\newtheorem{lem}[example]{Lemma}
\newtheorem{theorem}[example]{Theorem}
\newtheorem*{Operation A}{Operation A}
\newtheorem*{Operation B}{Operation B}
\newtheorem*{Operation C}{Operation C}
\newtheorem*{Operation 1}{Operation 1}
\newtheorem*{Operation 2}{Operation 2}
\newtheorem*{Operation 3}{Operation 3}
\newtheorem*{theorem0}{Theorem}

\newtheorem{mainresult}{Main result}
\theoremstyle{remark}
\newtheorem{remark}[example]{Remark}
\DeclareMathOperator{\im}{im}

\DeclareMathOperator{\Gal}{Gal}

\DeclareMathOperator{\Aut}{Aut}

\DeclareMathOperator{\id}{id}
\DeclareMathOperator{\tor}{tor}
\DeclareMathOperator{\rank}{rank}

\DeclareMathOperator{\Z}{\mathbb{Z}}
\begin{document}
\title{On Iwasawa's class number formula for $\Z_p\rtimes\Z_p$-extensions}
\author{Sohei Tateno}
\date{June 26, 2019}
\maketitle

\begin{abstract}
Let $p$ be a prime number. In this paper, we estimate the variation of the sizes of quotients of certain finitely generated $p$-torsion Iwasawa modules, which are closely related to class numbers. We also construct some $\mathbb{Z}_p\rtimes\mathbb{Z}_p$-extensions whose Iwasawa $\mu$-invariant is nonzero. At the end of this paper, we calculate the determinants of some matrices that are related to the groups $\mathbb{Z}_p\rtimes\mathbb{Z}_p$.
\end{abstract}

\section{Introduction}
In \cite{Iwasawa}, Iwasawa proved the following result, which is so called Iwasawa's class number formula.
\begin{theorem0}[{\cite[p.~212]{Iwasawa}}, Theorem 4.]
Let $K_\infty /K$ be a $\mathbb{Z}_p$-extension and $K_n$ be the subfields corresponding to the subgroups $p^n\mathbb{Z}_p$ of $\mathbb{Z}_p$. If we denote $e_n$ for the $p$-exponent of the class number $h(K_n)$ of $K_n$, then there exist some $\mu,\lambda\geq0$ and $\nu\in\mathbb{Z}$ such that
\begin{equation*}
e_n=\mu p^n+\lambda n+\nu
\end{equation*}
for sufficiently large $n$.
\end{theorem0}
This result is known to be the first assymptotic formula which explains the regularity of the variation of the class numbers of certain towers of number fields. In \cite{CM}, Cuoco and Monsky generalized this result to general $\mathbb{Z}_p^d$-extensions.
\begin{theorem0}[{\cite[p.~235]{CM}}, Theorem I.]
Let $K_\infty /K$ be a $\mathbb{Z}_p^d$-extension and $K_n$ be the subfields corresponding to the subgroups $p^n\mathbb{Z}_p^d$ of $\mathbb{Z}_p^d$. If we denote $e_n$ for the $p$-exponent of the class number $h(K_n)$ of $K_n$, then there exist some $\mu,\lambda\geq0$ such that
\begin{equation*}
e_n=\mu p^{dn}+\lambda np^{(d-1)n}+O(p^{(d-1)n}).
\end{equation*}
\end{theorem0}
In \cite{Perbet}, Perbet obtained some similar results on certain $p$-adic Lie groups by using a result of Harris(\cite{Harris}).
\begin{theorem0}[{\cite[p.~837]{Perbet}}, Corollary 3.4.]
Let $G$ be a finitely generated $p$-valued pro-$p$ group with dimension $d$. Let $K_\infty/K$ be a $G$-extension in which only finitely many primes ramify. Let $K_n$ be the subfields corresponding to $G_n:=\{x\in G|\omega(x)>(p-1)^{-1}+n-1\}$, where $\omega$ is the valuation of $G$. If we denote $Cl_n(p)$ for the $p$-Sylow subgroup of the ideal class group of $K_n$, then there exist some $\rho, \mu\geq0$ such that
\begin{equation*}
\#(Cl_n(p)/p^nCl_n(p))=p^{(\rho n+\mu)(G:G_n)+O(np^{n(d-1)})}.
\end{equation*}
\end{theorem0}
By restricting the $p$-adic Lie groups dealt with, Lei succeeded to obtain several more precise results in \cite{Lei}. In particular, the following result is a generalization of the result of Cuoco and Monsky when $d=2$. Let $p$ be an odd prime number and $K$ be a number field which admits a unique prime ideal $\mathfrak{p}$ lying above $p$. Let $K_\infty/K$ be a Galois extension in which $\mathfrak{p}$ is totally ramified and only finitely many primes ramify. We assume that the Galois group $G:=G(K_\infty/K)$ can be written in the form $G=H\rtimes\Gamma$, where $H$ and $\Gamma$ are subgroups of $G$ isomorphic to the additive group $\mathbb{Z}_p$. Then there exists a $H$-subextension $K_\infty/K^c$ of $K_\infty/K$ such that $G(K^c/K)\simeq\Gamma$. Assume that every prime of $K$ which ramifies in $K_\infty/K$ splits into finitely many primes in $K^c$. Let $L_\infty$ be the maximal unramified abelian pro-$p$ extension of $K_\infty$ and put $X:=G(L_\infty/K_\infty)$. Then $X$ becomes a finitely generated $\Lambda(G)$-module, where $\Lambda(G):=\mathbb{Z}_p[\![G]\!]$. We put $X(p):=\tor_{\mathbb{Z}_p}X$, $\Gamma_m:=\Gamma^{p^m}$, $H_n:=H^{p^n}$, and $\lambda_G:=\rank_{\Lambda(H)}(X/X(p))$. Let $K_n$ be the subfields of $K_\infty/K$ corresponding to the subgroups $H_n\rtimes\Gamma_n$ of $G$ and $e_n$ denote the $p$-exponent of the class number of $K_n$.

\begin{theorem0}[{\cite[p.~361]{Lei}}, Corollary 5.3.]
Assume that $X$ is finitely generated over $\Lambda(H)$. Then one has
\begin{equation*}
e_n=\lambda_G np^n+O(p^n).
\end{equation*}
\end{theorem0}

The assumption $X$ being finitely generated over $\Lambda(H)$ implies that the so-called Iwasawa $\mu_G$-invariant is equal to zero. The purpose of this paper is to consider some estimates of the variation of the sizes of quotients of certain finitely generated $p$-torsion $\Lambda(G)$-modules, which are closely related to class numbers, when $\mu_G$ is not necessarily zero.

Let $X$ be a finitely generated $p$-torsion $\Lambda(G)$-module and $I_{\Gamma_n}, I_{H_m}$ the kernel of the natural surjective homomorphisms of rings $\mathbb{Z}_p[\![\Gamma_n]\!]\twoheadrightarrow\mathbb{Z}_p, \mathbb{Z}_p[\![H_m]\!]\twoheadrightarrow\mathbb{Z}_p$ respectively. Then, by Lemma \ref{newproof1}, $X_{H_m}:=X/I_{H_m}X$ becomes a finitely generated $\Lambda(\Gamma)$-module for each $m\geq0$. By \cite{Venjakob2}[p.~295, Theorem 3.40.], one has an exact sequence of finitely generated $\Lambda(G)$-modules
\begin{equation*}\label{neweq}
0\rightarrow A\rightarrow X\buildrel{\varphi}\over\rightarrow\bigoplus_{i=1}^s\Lambda(G)/p^{m_i}\Lambda(G)\rightarrow  B\rightarrow0
\end{equation*}
with $A$,$B$ pseudo-null. We also assume that $A$,$B$ are finitely generated over $\Lambda(H)$. Then we have

\begin{mainresult}[Theorem \ref{doctortheorem}]
Let $X'_{H_m}$ be the maximal finite $\Lambda(\Gamma)$-submodule of $X_{H_m}$. Then there exist $\mu_A,\nu_A$ depending on $A$ and $\nu_B\in\mathbb{Z}$ depending on $B$ such that
\begin{equation*}
\# X'_{H_m}=p^{\mu_Ap^m+\nu_A+\nu_B}.
\end{equation*}
for sufficiently large $m$.
\end{mainresult}

\begin{mainresult}[Corollary \ref{corcor}]
We have
\begin{equation*}
p^{\mu_Gp^{2n}}\leq\#(X_{H_n})_{\Gamma_n}\leq p^{\mu_Gp^{2n}+\mu_Ap^n+\nu_A+\nu_B}
\end{equation*}
for all $n\geq0$.
\end{mainresult}

Although there are some $\Lambda(G)$-modules satisfying the assumptions of these main results with $\mu_G\neq0$ in the ring theoretical settings, actual $\mathbb{Z}_p\rtimes\mathbb{Z}_p$-extensions whose Iwasawa module satisfies the assumptions with $\mu_G\neq0$ have not been found as far as the author knows. To begin with dealing with this problem, we will also construct some $\mathbb{Z}_p\rtimes\mathbb{Z}_p$-extensions with $\mu_G\neq0$. We have the following main result, which is a partial analogue of Iwasawa's result (\cite{Iwasawa2}[p.~6, Theorem 1.]) for $\mathbb{Z}_p$-extensions.

\begin{mainresult}[Theorem \ref{0118}]
Let $p=3$ and $K:=\mathbb{Q}(\zeta_3)$. For any $N\geq1$, there exists a cyclic extension $K'/K$ with degree $3$ and $L'/K'$ with $\Gal(L'/K')\simeq\mathbb{Z}_p\rtimes\mathbb{Z}_p$ such that $\mu_G(L'/K')\geq N$.
\end{mainresult}

At the end of this paper, we would like to calculate the determinants of some matrices that are related to the groups $\mathbb{Z}_p\rtimes\mathbb{Z}_p$. Let $p$ be a prime number and $n,d\geq 1$ integers. Let $u$ be an integer such that $(p,u)=1$. Let $A(p,n,d,u)=(a_{ij})\in M_{p^n}(\mathbb{Z}_p)$ defined by
\begin{equation*}
a_{ij}=
\begin{cases}
1&j\equiv(i-1)(1+pu)^k+1\mod p^n(0\leq k\leq d-1)\\
0&\text{otherwise}.
\end{cases}
\end{equation*}

\begin{mainresult}[Theorem \ref{03122}]
We have
\begin{equation*}
|A|=
\begin{cases}
1&n=1\\
d^{(p-1)(n-1)}&n\neq1, d<p\\
0&n\neq1, d\geq p.
\end{cases}
\end{equation*}
\end{mainresult}

\section*{Acknowledgement}

I would like to thank Hiroshi Suzuki most warmly for his steady guidance and helpful comments as my advisor. I am immensely indebted to Takenori Kataoka for helping me considering Theorem \ref{0118} together. I am also very grateful to Antonio Lei and Otmar Venjakob for not only answering my numerous questions throughout Iwasawa 2017 but also answering my additional questions even after Iwasawa 2017 via e-mail. Also, I cannot thank Takashi Hara too much for teaching me this field and giving me a lot of advice. Yasushi Mizusawa is also thanked for answering my questions carefully and clearly. Tomohiro Ikkai, Haibo Jin, Yuta Suzuki, Oliver Thomas, and Kota Yamamoto are thanked for very informatic discussions during the preparation of this paper. Finally, I greatly appreciate the continued support of my parents.

\section{Some properties of $\Lambda(G)$-modules}

In this section, we shall study some properties of $\Lambda(G)$-modules which will be used in the proof of our main results.

 Let $G$ be a profinite group such that $G=H\rtimes_\phi\Gamma$, where $H$ and $\Gamma$ are multiplicative groups which are isomorphic to the additive group $\mathbb{Z}_p$ and $\phi:\Gamma\rightarrow \Aut (H)$ is a fixed group homomorphism. We fix topological generators $h$ of $H$ and $\gamma$ of $\Gamma$. Define the ring automorphism $\sigma:\mathbb{Z}_p[[S]]\rightarrow\mathbb{Z}_p[[S]]$ by $\sigma(S):=\sum_{i=1}^\infty\binom{\phi(\gamma)}{i}S^i$ and the group homomorphism $\delta:\mathbb{Z}_p[[S]]\rightarrow\mathbb{Z}_p[[S]]$ by $\delta:=\sigma-\id$. Then, by \cite{Venjakob1}[p.~157, Example 2.2.], we obtain a ring isomorphism
\begin{equation*}
\Lambda=\Lambda(G):=\mathbb{Z}_p[\![G]\!]\simeq\mathbb{Z}_p[[S]][[T;\sigma,\delta]]=:\mathbb{Z}_p[[S,T;\sigma,\delta]]
\end{equation*}
where $h$ and $\gamma$ are corresponding to $1+S$ and $1+T$ respectively. Recall that the multiplication of $\mathbb{Z}_p[[S,T;\sigma,\delta]]$ is given by
\begin{equation*}
TF(S)=\sigma(F(S))T+\delta(F(S))
\end{equation*}
for $F(S)\in\mathbb{Z}_p[[S]]$. The topology of $\Lambda(G)$ is defined in section 7.1 of \cite{Dixon}. 
By \cite{Venjakob1}[p.~158, Example 2.3.], $G$ is a uniform pro-$p$ group. Therefore, by \cite{Dixon}[p.~161, 7.25 Corollary], $\Lambda(G)$ is a non-commutative noetherian integral domain. Let $I_{\Gamma_n}, I_{H_m}$ be the kernel of the natural surjective homomorphisms of rings $\mathbb{Z}_p[\![\Gamma_n]\!]\twoheadrightarrow\mathbb{Z}_p, \mathbb{Z}_p[\![H_m]\!]\twoheadrightarrow\mathbb{Z}_p$ respectively. Put $\omega_m:=(1+S)^{p^m}-1\in\mathbb{Z}_p[[S]]\simeq\Lambda(H)$. Then we find $I_{H_m}\Lambda(G)=\omega_m\Lambda(G)$. When we say $X$ is a $\Lambda(G)$-module, we always understand that $X$ carries the structure of a Hausdorff abelian topological group and the structure of a left $\Lambda(G)$-module such that the action $\Lambda(G)\times X\rightarrow X$ is continuous. 

\begin{lem}\label{newproof1}
\begin{enumerate}[label=$(\roman*)$]
\item
One has $I_{H_m}\Lambda(G)=\Lambda(G)I_{H_m}$.
\item
$\Lambda(G)/I_{H_m}\Lambda(G)$ is finitely generated over $\Lambda(\Gamma)$ as a left $\Lambda(\Gamma)$-module.
\end{enumerate}
\end{lem}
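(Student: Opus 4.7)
The plan is to attack (i) by showing that both $I_{H_m}\Lambda(G)$ and $\Lambda(G)I_{H_m}$ coincide with the kernel of the natural continuous ring surjection $\Lambda(G)\twoheadrightarrow\Lambda(G/H_m)$. First I would verify that $H_m$ is a closed normal subgroup of $G$: writing $a\in\mathbb{Z}_p^\times$ for the scalar by which $\phi(\gamma)$ acts on $H\simeq\mathbb{Z}_p$, one has $\gamma h^{p^m k}\gamma^{-1}=h^{ap^m k}\in H_m$, so conjugation preserves $H_m$. Since $H_m$ is topologically cyclic with generator $h^{p^m}$, the augmentation ideal satisfies $I_{H_m}=\omega_m\mathbb{Z}_p[\![H_m]\!]$, hence $I_{H_m}\Lambda(G)=\omega_m\Lambda(G)$ and $\Lambda(G)I_{H_m}=\Lambda(G)\omega_m$. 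The identity $g(n-1)=(gng^{-1}-1)g$, valid for $g\in G$ and $n\in H_m$ with $gng^{-1}\in H_m$ by normality, then converts generators of the closed left $\Lambda(G)$-span of $I_{H_m}$ into generators of the right $\Lambda(G)$-span (and symmetrically), so both spans coincide with the kernel above.

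For (ii), I would invoke (i) to identify $\Lambda(G)/I_{H_m}\Lambda(G)$ with $\Lambda(G/H_m)$, where $G/H_m\simeq(H/H_m)\rtimes\Gamma$ contains $\Gamma$ as a closed subgroup of finite index $p^m$. The goal then reduces to the general fact that for a closed subgroup $\Gamma'$ of finite index $n$ inside a profinite group $G'$, $\Lambda(G')$ is free of rank $n$ as a left $\Lambda(\Gamma')$-module. I would prove this by picking right coset representatives $g_1,\ldots,g_n$ with $G'=\bigsqcup_i\Gamma' g_i$, observing that for each open normal $U\trianglelefteq G'$ contained in $\Gamma'$ (such $U$ are cofinal among the open subgroups of $\Gamma'$, since $\Gamma'$ is automatically open as a closed finite-index subgroup) one has a decomposition $\mathbb{Z}_p[G'/U]=\bigoplus_i\mathbb{Z}_p[\Gamma'/U]\,(g_iU)$ as a left $\mathbb{Z}_p[\Gamma'/U]$-module, and then passing to the inverse limit (which commutes with the finite direct sum). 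In our situation, the classes of $1,h,h^2,\ldots,h^{p^m-1}$ constitute an explicit free basis of rank $p^m$, giving finite generation.

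The main obstacle is the non-commutative left/right bookkeeping underlying (i): one must establish two-sidedness before identifying the quotient ring with $\Lambda(G/H_m)$, after which (ii) is essentially formal from $[G/H_m:\Gamma]=p^m<\infty$. A direct computational alternative for (i) via the skew power series presentation is also available, resting on the identity $\sigma(\omega_m)=(1+\omega_m)^a-1\in\omega_m\mathbb{Z}_p[\![S]\!]$ (and the analogous computation for $\sigma^{-1}$) to push $T$ past $\omega_m$ on either side; but the group-theoretic normality argument feels cleaner and sidesteps the need to track $\sigma^{-1}$ and $\delta\circ\sigma^{-1}$ explicitly.
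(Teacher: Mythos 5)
Your proof is correct, but it follows a genuinely different route from the paper's. For (i) the paper simply cites Venjakob (Prop.\ 7.6 of \cite{Venjakob1}), whereas you reprove it from scratch via the normality of $H_m$ in $G$ (which indeed holds, since $\phi(\gamma)$ acts on $H\simeq\mathbb{Z}_p$ by a unit and hence preserves $H^{p^m}$) and the standard identity $g(n-1)=(gng^{-1}-1)g$; that is exactly the right mechanism and is a self-contained substitute for the citation. For (ii) the paper stays inside the skew power series presentation $\mathbb{Z}_p[[S,T;\sigma,\delta]]$: it expands $r=\sum_{i\ge0}F_i(T)S^i$ using the uniform-group basis theorem and repeatedly reduces the $S$-degree modulo $\omega_m$ via $S^{p^m}\equiv-\sum_{j=1}^{p^m-1}\binom{p^m}{j}S^j$, finishing with a (somewhat informal) remark that the resulting coefficients converge $p$-adically. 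Your argument instead identifies $\Lambda(G)/I_{H_m}\Lambda(G)$ with $\Lambda(G/H_m)$ and invokes the coset decomposition $G/H_m=\coprod_{i=0}^{p^m-1}\Gamma\,\bar h^{\,i}$, passing to the inverse limit over open normal subgroups of $G/H_m$ contained in $\Gamma$. This buys you strictly more than the paper's computation: you get that the quotient is \emph{free} of rank $p^m$ over $\Lambda(\Gamma)$, i.e.\ $\Lambda(G)/\omega_m\Lambda(G)\simeq\Lambda(\Gamma)^{p^m}$, which is precisely the unproved isomorphism the paper relies on later in Lemma \ref{newproof2}; it also sidesteps the convergence bookkeeping. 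The paper's approach is more explicit and constructive (it produces the reduced representatives $\sum_{i<p^m}F_i'(T)S^i$ by hand), which is occasionally useful for concrete computations like the example in Section 5.
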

\begin{proof}
\begin{enumerate}[label=$(\roman*)$]
\item
This is proved in \cite{Venjakob1}[p.~181, Proposition 7.6.].
\item
By \cite{Venjakob1}[p.~158, Example 2.3.], $G$ is a uniform pro-$p$ group. Hence, for any $r\in\Lambda(G)$, \cite{Dixon}[p.~155, 7.20 Theorem] implies that $r$ can be uniquely written in the form
\begin{equation*}
r=\sum_{j=0}^\infty F_i(T)S^i
\end{equation*}
where $F_i(T)\in\mathbb{Z}_p[[T]]\simeq\Lambda(\Gamma)$. By ($i$), we have $I_{H_m}\Lambda(G)=\Lambda(G)I_{H_m}$. Since $\Lambda(G)I_{H_m}=\Lambda(G)\omega_m$, we have the congruence
\begin{equation*}
F_i(T)S^i\equiv F_i(T)(-\sum_{j=1}^{p^m-1}\binom{p^m}{j}S^j)S^{i-p^m}\pmod{\Lambda(G)I_{H_m}}
\end{equation*}
for each $i\geq p^m$. By using this congruence many times, we obtain some $F'_0(T), \cdots, F'_{p^m-1}(T)\in\mathbb{Z}_p[[T]]\simeq\Lambda(\Gamma)$ such that $r=\sum_{i=0}^{p^m-1}F'_i(T)S^i$. Note that each coefficient of $F'_i(T)$ converges in $\mathbb{Z}_p$ because $\mathbb{Z}_p$ is equipped with $p$-adic topology. This completes the proof.
\end{enumerate}
\end{proof}
If $X$ is a finitely generated $\Lambda(G)$-module, then, by Lemma \ref{newproof1}, $X_{H_m}:=X/I_{H_m}X$ becomes a finitely generated $\Lambda(\Gamma)$-module for each $m\geq0$. Hence there exists the maximal finite $\Lambda(\Gamma)$-submodule $X'_{H_m}$ of $X_{H_m}$ for each $m\geq0$.

\begin{lem}\label{newproof2}
For each $m\geq0$, we have
\begin{equation*}
\bigoplus_{i=1}^s(\Lambda(G)/p^{m_i}\Lambda(G))_{H_m}\simeq\bigoplus_{i=1}^s(\Lambda(\Gamma)/p^{m_i}\Lambda(\Gamma))^{p^m}
\end{equation*}
as $\Lambda(\Gamma)$-modules.
\end{lem}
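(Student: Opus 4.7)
The plan is to identify $\Lambda(G)/I_{H_m}\Lambda(G)$ with $\Lambda(G/H_m)$ and then exploit the fact that $\Gamma$ sits inside $G/H_m$ as a closed subgroup of finite index $p^m$.

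Since $H_m$ is characteristic in $H$, which is normal in $G$, it is a closed normal subgroup of $G$. By Lemma \ref{newproof1}(i), the ideal $I_{H_m}\Lambda(G) = \omega_m \Lambda(G) = \Lambda(G)\omega_m$ is two-sided, and it is precisely the kernel of the natural ring surjection $\Lambda(G) \twoheadrightarrow \Lambda(G/H_m)$ induced by $G \twoheadrightarrow G/H_m$; hence $\Lambda(G)/I_{H_m}\Lambda(G) \simeq \Lambda(G/H_m)$.

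The quotient group $G/H_m \simeq (H/H_m) \rtimes \Gamma$ contains $\Gamma$ as a closed subgroup of finite index $|H/H_m| = p^m$. By the standard fact that $\Lambda$ of a profinite group is free over $\Lambda$ of a closed subgroup of finite index, obtained by taking the inverse limit of the analogous decomposition for finite group rings, there is an isomorphism of left $\Lambda(\Gamma)$-modules
\[
\Lambda(G/H_m) \simeq \bigoplus_{i=0}^{p^m-1} \Lambda(\Gamma)\, g_i \simeq \Lambda(\Gamma)^{p^m},
\]
where $\{g_0, g_1, \ldots, g_{p^m-1}\}$ is a set of right coset representatives of $\Gamma$ in $G/H_m$ (for instance, $g_i = h^i$).

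Since $p^{m_i}$ is central in $\Lambda(G)$, combining the two isomorphisms gives
\[
(\Lambda(G)/p^{m_i}\Lambda(G))_{H_m} \simeq \Lambda(G/H_m)/p^{m_i}\Lambda(G/H_m) \simeq (\Lambda(\Gamma)/p^{m_i}\Lambda(\Gamma))^{p^m},
\]
and taking the direct sum over $i=1,\ldots,s$ produces the desired isomorphism. The main point requiring care is the left $\Lambda(\Gamma)$-module structure in the freeness step: since $\phi$ is generally nontrivial, $\Gamma$ need not be normal in $G/H_m$, so one must use \emph{right} cosets of $\Gamma$ in $G/H_m$ in order to obtain a left $\Lambda(\Gamma)$-basis rather than a right one.
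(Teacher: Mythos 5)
Your proposal is correct and follows essentially the same route as the paper: both reduce the claim to the fact that $\Lambda(G)/\omega_m\Lambda(G)$ is a free left $\Lambda(\Gamma)$-module of rank $p^m$ and then pass to the quotient by the central element $p^{m_i}$. The only difference is cosmetic: the paper asserts the freeness directly (implicitly via the basis $1,S,\dots,S^{p^m-1}$ from Lemma \ref{newproof1}), whereas you justify it by identifying $\Lambda(G)/I_{H_m}\Lambda(G)$ with $\Lambda(G/H_m)$ and invoking the standard coset decomposition over the open subgroup $\Gamma$ — with the correct attention to right cosets.
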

\begin{proof}
We have
\begin{eqnarray*}
&(\Lambda(G)/p^{m_i}\Lambda(G))_{H_m}\\
\simeq&\Lambda(G)/(p^{m_i}\Lambda(G)+\omega_m\Lambda(G))\\
\simeq&(\Lambda(G)/\omega_m\Lambda(G))/p^{m_i}(\Lambda(G)/\omega_m\Lambda(G)).
\end{eqnarray*}
Since we have
\begin{equation*}
\Lambda(G)/\omega_m\Lambda(G)\simeq\Lambda(\Gamma)^{p^m}
\end{equation*}
as a $\Lambda(\Gamma)$-module, we obtain
\begin{eqnarray*}
(\Lambda(G)/p^{m_i}\Lambda(G))_{H_m}&\simeq&\Lambda(\Gamma)^{p^m}/p^{m_i}\Lambda(\Gamma)^{p^m}\\
&\simeq&(\Lambda(\Gamma)/p^{m_i}\Lambda(\Gamma))^{p^m}.
\end{eqnarray*}
as a $\Lambda(\Gamma)$-module. Therefore, one has
\begin{equation*}
\bigoplus_{i=1}^s(\Lambda(G)/p^{m_i}\Lambda(G))_{H_m}\simeq\bigoplus_{i=1}^s(\Lambda(\Gamma)/p^{m_i}\Lambda(\Gamma))^{p^m}
\end{equation*}
\end{proof}

\begin{lem}\label{injection}
\begin{equation*}
\Lambda(G)/p^k\Lambda(G)\buildrel{\omega_m(S)\times}\over\longrightarrow\Lambda(G)/p^k\Lambda(G)
\end{equation*}
is injective.
\end{lem}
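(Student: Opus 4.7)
The plan is to prove injectivity by induction on $k$, reducing to the essential case $k=1$: that multiplication by $\omega_m(S)$ on $\Lambda(G)/p\Lambda(G)$ has no kernel. For the base case, I would first observe that since $S$ commutes with itself, $\omega_m(S) = (1+S)^{p^m}-1 = \sum_{j=1}^{p^m}\binom{p^m}{j}S^j$, which reduces modulo $p$ to $S^{p^m}$ and is therefore nonzero in $\Lambda(G)/p\Lambda(G)$. Then I would invoke the fact that $\Lambda(G)/p\Lambda(G)\simeq\mathbb{F}_p[\![G]\!]$ is a (non-commutative) integral domain. This is the exact analogue of the statement that $\Lambda(G)$ is a domain cited from \cite{Dixon}[p.~161, 7.25 Corollary], and follows from the same graded argument: the natural filtration on $\mathbb{F}_p[\![G]\!]$ coming from the lower $p$-series of the uniform pro-$p$ group $G$ has associated graded equal to a polynomial ring $\mathbb{F}_p[\bar S,\bar T]$, which is a domain. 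Consequently $\omega_m(S)\cdot r \equiv 0 \pmod{p}$ forces $r\equiv0\pmod{p}$.

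For the inductive step, suppose $\omega_m(S)r = p^k s$ with $k\geq 2$. Reducing modulo $p$ and applying the base case forces $r\in p\Lambda(G)$, so write $r=pr_1$. Since $p$ is central in $\Lambda(G)$, we obtain $p\cdot\omega_m(S)r_1 = p^k s$, and the domain property of $\Lambda(G)$ (so that $p$ is a non-zero-divisor) lets us cancel to get $\omega_m(S)r_1 = p^{k-1}s$. The inductive hypothesis applied to $r_1$ then yields $r_1\in p^{k-1}\Lambda(G)$, whence $r\in p^k\Lambda(G)$, completing the induction.

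The chief obstacle—if any—is producing a clean reference for $\mathbb{F}_p[\![G]\!]$ being a domain; once that input is in hand, the rest is formal bookkeeping that leverages centrality of $p$ and the domain property of $\Lambda(G)$. I would also remark, using Lemma \ref{newproof1}$(i)$, that $\omega_m(S)$ is a normal element of $\Lambda(G)$, so the left and right ideals it generates coincide and the same argument works with the arrow interpreted as right multiplication, should that be the intended reading of the notation ``$\omega_m(S)\times$''.
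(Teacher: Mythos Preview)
Your proof is correct and follows a genuinely different route from the paper's. The paper factors $\omega_m=\xi_m\xi_{m-1}\cdots\xi_0$ with $\xi_i=\omega_i/\omega_{i-1}$, invokes Venjakob's result (\cite{Venjakob1}, Proposition~7.6) that each $\xi_i\Lambda$ is a two-sided prime ideal, and then peels the factors off one by one from an equation $\omega_m f=p^k g$: primeness of $\xi_i\Lambda$ forces $g\in\xi_i\Lambda$ (since $p^k\notin\xi_i\Lambda$), and integrality of $\Lambda(G)$ allows cancellation of $\xi_i$; after $m+1$ such steps one is left with $f=p^k g_{m+1}$. Your approach instead inducts on $k$ and rests on the single structural fact that $\mathbb{F}_p[\![G]\!]$ is a domain; since $\omega_m\equiv S^{p^m}\not\equiv 0\pmod p$, the base case is immediate and the inductive step is formal. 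Your argument is shorter, avoids the specific prime factorization of $\omega_m$ and the appeal to \cite{Venjakob1}, and in fact establishes the stronger statement that left multiplication by \emph{any} element of $\Lambda(G)\setminus p\Lambda(G)$ is injective on $\Lambda(G)/p^k\Lambda(G)$. The reference you seek for $\mathbb{F}_p[\![G]\!]$ being a domain is essentially the one the paper already cites for $\Lambda(G)$: the same graded-ring argument in \cite{Dixon} (around Theorem~7.24 and Corollary~7.25) applies with $\mathbb{F}_p$ coefficients, the associated graded with respect to the augmentation filtration being the commutative polynomial ring $\mathbb{F}_p[\bar S,\bar T]$; equivalently, that argument shows $p\Lambda(G)$ is a prime ideal.
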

\begin{proof}
For any $f(S,T)\in\Lambda(G)$, if $\omega_m(S)f(S,T)\in p^k\Lambda(G)$, then $g(S,T)\in\Lambda(G)$ such that $\omega_m(S)f(S,T)=p^kg(S,T)$. If we put $\xi_m:=\frac{\omega_m}{\omega_{m-1}}(\xi_0=\omega_0)$, then one can write $\omega_m=\xi_m\xi_{m-1}\cdots\xi_{0}$. By \cite{Venjakob1}[p.~182, Proposition 7.6.], $\xi_m$ are prime elements of $\Lambda(G)$, i.e., one has $\xi_m\Lambda=\Lambda\xi_m$ and $\xi_m\Lambda$ are prime ideals. Since $\xi_m\Lambda\supset p^k\Lambda\cdot\Lambda g(S,T)\Lambda$, one has either $\xi_m\Lambda\supset p^k\Lambda$ or $\xi_m\Lambda\supset\Lambda g(S,T)\Lambda$. If $\xi_m\Lambda\supset p^k\Lambda$, then there exists $h(S,T)\in\Lambda(G)$ such that $\xi_mh(S,T)=p^k$. But this is impossible. Hence $\xi_m\Lambda\supset\Lambda g(S,T)\Lambda$, and so there exists $g_1(S,T)\in\Lambda(G)$ such that $\xi_m g_1(S,T)=g(S,T)$. Therefore, we have
\begin{equation*}
\xi_m\cdots\xi_0f(S,T)=\xi_mg_1(S,T)p^k
\end{equation*}
Since $\Lambda(G)$ is integral, one can erase $\xi_m$. By iterating this, one obtains
\begin{equation*}
f(S,T)=g_{m+1}(S,T)p^k
\end{equation*}
for some $g_{m+1}\in\Lambda(G)$.
Hence $f(S,T)\in p^k\Lambda(G)$.
\end{proof}

\begin{lem}\label{0110}
Let $K$ be a number field, $K_\infty/K$ a $\mathbb{Z}_p$-extension, and $L_\infty/K_\infty$ the maximal $p$-ramified abelian pro-$p$ extension. Let $L/K_\infty$ be a Galois subextension of $L_\infty/K_\infty$. Then $\Gal(L/K)$ is abelian if and only if $K_\infty/K$ acts trivially on $L/K_\infty$.
\end{lem}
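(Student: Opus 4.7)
The plan is to analyze the short exact sequence $1 \to H \to G \to \Gamma \to 1$, where $G := \Gal(L/K)$, $H := \Gal(L/K_\infty)$, and $\Gamma := \Gal(K_\infty/K) \simeq \mathbb{Z}_p$. The key preliminary observation is that $H$ is abelian, because it is a quotient of $\Gal(L_\infty/K_\infty)$, which is abelian by the definition of $L_\infty$.

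Because $H$ is abelian, the conjugation action $\gamma \cdot h := \tilde\gamma h \tilde\gamma^{-1}$ (for any lift $\tilde\gamma \in G$ of $\gamma \in \Gamma$) is well-defined and independent of the choice of lift; this is precisely the action of $\Gamma$ on $H$ appearing in the statement. The forward direction is then immediate: if $G$ is abelian, then $\tilde\gamma h \tilde\gamma^{-1} = h$ for every lift $\tilde\gamma$ and every $h \in H$, so the action is trivial.

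For the converse, I would fix a topological generator $\gamma$ of $\Gamma$ and choose a lift $\tilde\gamma \in G$. Since the image of $\langle \tilde\gamma \rangle$ is dense in $\Gamma$, the group $G$ is topologically generated by $H \cup \{\tilde\gamma\}$. Under the triviality hypothesis, $\tilde\gamma$ commutes with every element of $H$, and elements of $H$ commute among themselves; hence $G$ is topologically generated by pairwise commuting elements and is therefore abelian. There is no serious obstacle in this argument; the only point requiring any care is the well-definedness of the $\Gamma$-action on $H$, which rests on $H$ being abelian.
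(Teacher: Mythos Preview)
Your proof is correct and follows essentially the same approach as the paper: both analyze the extension $1 \to \Gal(L/K_\infty) \to \Gal(L/K) \to \Gamma \to 1$, lift a topological generator $\gamma$ of $\Gamma$, and reduce commutativity of $\Gal(L/K)$ to the triviality of the conjugation action. The only cosmetic difference is that the paper writes $\Gal(L/K)$ explicitly as a semidirect product via a section $\gamma \mapsto \tilde\gamma$ and then checks $(y_1,\gamma_1)(y_2,\gamma_2)=(y_2,\gamma_2)(y_1,\gamma_1)$, whereas you argue directly that $G$ is topologically generated by the pairwise commuting set $H \cup \{\tilde\gamma\}$; your version is slightly more streamlined but not substantively different.
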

\begin{proof}
Assume that $K_\infty/K$ acts trivially on $L/K_\infty$. Consider the exact sequence of topological groups
\begin{equation*}
0\rightarrow\Gal(L/K_\infty)\rightarrow \Gal(L/K)\buildrel{j}\over\rightarrow\Gal(K_\infty/K)\rightarrow0.
\end{equation*}
Let $\tilde{\gamma}\in\Gal(L/K)$ be a lifting of the fixed topological generator $\gamma\in\Gamma=\Gal(K_\infty/K)$. Then one can define the homomorphism $\varphi:\Gal(K_\infty/K)\ni\gamma\mapsto\tilde{\gamma}\in\Gal(L/K)$. Since $j\circ\varphi=\id_\Gamma$, we obtain
\begin{equation*}
\Gal(L/K)=\Gal(L/K_\infty)\rtimes\Gal(K_\infty/K).
\end{equation*}
For any $x_1, x_2\in\Gal(L/K_\infty)$, there exist $y_1, y_2\in\Gal(L/K_\infty)$ and $\gamma_1, \gamma_2\in\Gamma$ such that $x_i=(y_i,\gamma_i)$ for $i=1,2$. Hence
\begin{eqnarray*}
x_1\cdot x_2&=&(y_1,\gamma_1)(y_2,\gamma_2)=(y_1\tilde{\gamma}_1y_2\tilde{\gamma}_1^{-1}, \gamma_1\gamma_2)\\&=&(y_1y_2,\gamma_1\gamma_2)=x_2\cdot x_1.
\end{eqnarray*}
Thus $\Gal(L/K)$ is abelian. Conversely, if $\Gal(L/K)$ is abelian, then one has $\tilde{\gamma}'x\tilde{\gamma}'^{-1}=x$ for all $x\in\Gal(L/K_\infty)$ and $\gamma'\in\Gamma$. This completes the proof.
\end{proof}

\begin{definition}
Let $X$ be a finitely generated $\Lambda(G)$-module. Then, 
by \cite{Venjakob2}[p.~295, Theorem 3.40.], we have a pseudo-isomorphism 
\begin{equation*}
X(p)\sim\bigoplus_{i=1}^s\Lambda(G)/p^{m_i}\Lambda(G),
\end{equation*}
where $X(p):=\tor_{\mathbb{Z}_p}X$. $\mu_G:=\sum_{i=1}^s m_i$ is called the \textbf{Iwasawa $\mu_G$-invariant of $X$}.
\end{definition}

\section{The main result}

In this section, we prove one of our main results(Corollary \ref{corcor}) by using propositions and lemmas which we have proved already in Section 2.

Throughout this section, let $X$ be a $p$-torsion finitely generated $\Lambda(G)$-module. By \cite{Venjakob2}[p.~295, Theorem 3.40.], we have an exact sequence of finitely generated $\Lambda(G)$-modules
\begin{equation}\label{neweq}
0\rightarrow A\rightarrow X\buildrel{\varphi}\over\rightarrow\bigoplus_{i=1}^s\Lambda(G)/p^{m_i}\Lambda(G)\rightarrow  B\rightarrow0
\end{equation}
with $A$,$B$ pseudo-null. We assume that $A$,$B$ are finitely generated over $\Lambda(H)$.

\begin{theorem}\label{doctortheorem}
Let $X'_{H_m}$ be the maximal finite $\Lambda(\Gamma)$-submodule of $X_{H_m}$. Then there exist $\mu_A,\nu_A$ depending on $A$ and $\nu_B\in\mathbb{Z}$ depending on $B$ such that
\begin{equation*}
\# X'_{H_m}=p^{\mu_Ap^m+\nu_A+\nu_B}.
\end{equation*}
for sufficiently large $m$.
\end{theorem}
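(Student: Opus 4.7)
The plan is to split the four-term exact sequence
\[
0 \to A \to X \to E \to B \to 0, \qquad E := \bigoplus_{i=1}^s \Lambda(G)/p^{m_i}\Lambda(G),
\]
at its middle as two short exact sequences $0 \to A \to X \to Y \to 0$ and $0 \to Y \to E \to B \to 0$ with $Y := \varphi(X)$, and to apply the snake lemma to multiplication by $\omega_m$. Lemma \ref{injection} says $\omega_m$ acts injectively on $E$, so $E[\omega_m]=0$, and hence also $Y[\omega_m]=0$. Combining this with Lemma \ref{newproof1}(i), which identifies $I_{H_m}M$ with $\omega_m M$ for any $\Lambda(G)$-module $M$, the snake lemma produces
\[
0 \to A_{H_m} \to X_{H_m} \to Y_{H_m} \to 0
\quad\text{and}\quad
0 \to B[\omega_m] \to Y_{H_m} \to E_{H_m} \to B_{H_m} \to 0.
\]

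Next I would pin down the maximal finite $\Lambda(\Gamma)$-submodule of $X_{H_m}$ inside these sequences. Since $A$ is finitely generated over $\Lambda(H)\cong\mathbb{Z}_p[\![S]\!]$ and killed by some $p^N$, the quotient $A_{H_m}=A/\omega_m A$ is a finitely generated module over the finite ring $\Lambda(H)/(\omega_m,p^N)$, hence finite; the same argument shows $B_{H_m}$ and $B[\omega_m]$ are finite. On the other hand, Lemma \ref{newproof2} identifies $E_{H_m}$ with $\bigoplus_i(\Lambda(\Gamma)/p^{m_i}\Lambda(\Gamma))^{p^m}$, and each summand $(\mathbb{Z}_p/p^{m_i})[\![T]\!]$ is $T$-torsion-free, so $E_{H_m}$ contains no nonzero finite $\Lambda(\Gamma)$-submodule. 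It follows from the second sequence that $Y'_{H_m}=B[\omega_m]$. For the first sequence, given $y\in Y'_{H_m}$ and any lift $x\in X_{H_m}$, the submodule $\Lambda(\Gamma)x$ is an extension of the finite module $\Lambda(\Gamma)y$ by a submodule of the finite $A_{H_m}$, hence finite, so $x\in X'_{H_m}$. Thus $X'_{H_m}\twoheadrightarrow Y'_{H_m}$ and
\[
\# X'_{H_m} = \# A_{H_m}\cdot\# B[\omega_m].
\]

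It remains to compute each factor asymptotically over the commutative ring $\Lambda(H)$, where the usual two-dimensional structure theory applies because both $A$ and $B$ are finitely generated $p$-power torsion $\Lambda(H)$-modules. Choose a pseudo-isomorphism $A\to\bar A=\bigoplus_j\Lambda(H)/p^{a_j}\Lambda(H)$ with finite kernel $K_A$ and finite cokernel $C_A$. Since $\omega_m\in(p,S)^{m+1}$, for $m$ large both $K_A$ and $C_A$ are annihilated by $\omega_m$; combined with $\bar A$ being $\omega_m$-torsion-free and $\bar A/\omega_m\bar A$ having order $p^{(\sum a_j)p^m}$, the associated $\mathrm{Tor}$ long exact sequences collapse to give $\#A_{H_m}=\#K_A\cdot p^{\mu_A p^m}$ with $\mu_A:=\sum_j a_j$. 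The same reasoning shows that, for $m$ large, $B[\omega_m]$ equals the maximal finite $\Lambda(H)$-submodule $B_0$ of $B$, because $B/B_0$ sits, up to a finite cokernel, inside an $\omega_m$-torsion-free module of the form $\bigoplus\Lambda(H)/p^{b_i}\Lambda(H)$. Setting $\nu_A:=\log_p\#K_A$ and $\nu_B:=\log_p\#B_0$ yields the asserted formula.

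The main obstacle is the non-commutativity of $\Lambda(G)$: one must lean carefully on Lemma \ref{newproof1}(i) and Lemma \ref{injection} to validate the snake-lemma manipulations and to reduce everything to the commutative ring $\Lambda(H)$, where the Iwasawa-type asymptotic becomes available. A subsidiary but essential point is the surjectivity $X'_{H_m}\twoheadrightarrow Y'_{H_m}$, handled by the cyclic-submodule argument above.
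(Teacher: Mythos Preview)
Your proof is correct and follows the same overall strategy as the paper: split the four-term sequence, apply the snake lemma for $\omega_m$ using Lemma~\ref{injection}, identify $X'_{H_m}$ with the kernel of $X_{H_m}\to E_{H_m}$, and show the two contributions stabilize. The only difference is packaging: where the paper introduces the comparison map $\psi:(\im\varphi)_{H_m}\to\im(\varphi_{H_m})$ and a Noetherian ascending-chain argument on $\Omega_m^{-1}(\im\varphi)/\im\varphi\subset B$, you apply the snake lemma a second time to $0\to Y\to E\to B\to 0$ and read off $B[\omega_m]$ directly---these are the same object, since $\Omega_m^{-1}(\im\varphi)/\im\varphi=B[\omega_m]$. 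Your identification of the stable value as the maximal finite $\Lambda(H)$-submodule $B_0$ (via the injection $B/B_0\hookrightarrow\bigoplus\Lambda(H)/p^{b_i}$) is a slightly more explicit alternative to the paper's bare Noetherian argument, and your structure-theoretic computation of $\#A_{H_m}$ just unwinds the classical Iwasawa formula the paper invokes.
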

\begin{proof}
By \eqref{neweq}, we have two exact sequences of $\Lambda(G)$-modules
\begin{equation*}
0\rightarrow A\rightarrow X\rightarrow\im\varphi\rightarrow0
\end{equation*}
and
\begin{equation*}
X\rightarrow Y\rightarrow B\rightarrow 0,
\end{equation*}
where we put $Y:=\bigoplus_{i=1}^s\Lambda(G)/p^{m_i}\Lambda(G)$. For any $m\geq0$, consider the exact commutative diagram
\[
\xymatrix{
0\ar[r]&A\ar[r]\ar[d]&X\ar[r]\ar[d]_{\omega_m(S)\times}&\im\varphi\ar[r]\ar[d]&0\\
0\ar[r]&A\ar[r]&X\ar[r]&\im\varphi\ar[r]&0.
}
\]
Since Lemma \ref{injection} ensures the injectivity of $\omega_m(S)\times:\im\varphi\rightarrow\im\varphi$, by the snake lemma, we obtain the exact sequence of $\Lambda(\Gamma)$-modules
\begin{equation}\label{neweq2}
0\rightarrow A_{H_m}\rightarrow X_{H_m}\buildrel{\overline{\varphi}}\over\rightarrow(\im\varphi)_{H_m}\rightarrow0
\end{equation}

Since taking $/I_{H_m}$ is right exact, we also have the exact sequence of $\Lambda(\Gamma)$-modules
\begin{equation}\label{neweq3}
X_{H_m}\buildrel{\varphi_{H_m}}\over\rightarrow Y_{H_m}\rightarrow B_{H_m}\rightarrow0.
\end{equation}
By assumption, $A$ and $B$ are finitely generated torsion $\mathbb{Z}_p$-modules. Therefore, we have $\# A_{H_m}=p^{\mu_Ap^m+\nu_A}$ for large $m$ and $\# B_{H_m}=p^{O(p^m)}$. Consider the commutative diagram of $\Lambda(\Gamma)$-modules
\[
\xymatrix{
X \ar[r]^{\varphi} \ar[d]_{\pi} & Y \ar[d]^{\pi} \\
X_{H_m} \ar[r]^{\varphi_{H_m}} & Y_{H_m}
}
\]
We can define the well-defined surjective homomorphism of $\Lambda(\Gamma)$-modules
\begin{equation*}
\begin{matrix}
\psi:&(\im\varphi)_{H_m}&\longrightarrow&\im(\varphi_{H_m})\\
&\rotatebox{90}{$\in$}&&\rotatebox{90}{$\in$}\\
&\varphi(x)+I_{H_m}\im\varphi&\longmapsto&\varphi_{H_m}(x+I_{H_m}X)=\varphi(x)+I_{H_m}Y.
\end{matrix}
\end{equation*}
with $\ker\psi=(I_{H_m}Y\cap\im\varphi)/I_{H_m}\im\varphi$. Hence we obtain the commutative diagram of $\Lambda(\Gamma)$-modules

\[
\xymatrix{
& & 0 \ar[d] & & \\
& & \ker\overline{\varphi} \ar@{^{(}->}[ld] \ar[d] & & \\
0 \ar[r] & \ker(\varphi_{H_m}) \ar[r] & X_{H_m} \ar[r]^{\varphi_{H_m}} \ar[d]^{\overline{\varphi}} & \im(\varphi_{H_m}) \ar[r] & 0 \\
& & (\im\varphi)_{H_m} \ar@{->>}[ru]_{\psi} \ar[d]  & & \\
& & 0 & &
}
\]
Therefore, we can consider the commutative diagram of $\Lambda(\Gamma)$-modules
\[
\xymatrix{
0\ar[r]&\ker(\varphi_{H_m})/\ker\overline{\varphi} \ar[r]&X_{H_m}/\ker\overline{\varphi} \ar[r]\ar[d]&X_{H_m}/\ker(\varphi_{H_m})\ar[r]\ar[d]&0\\
0\ar[r]&(I_{H_m}Y\cap\im\varphi)/I_{H_m}\im\varphi \ar[r]&(\im\varphi)_{H_m} \ar[r]&\im(\varphi_{H_m})\ar[r]&0
}
\]
This implies that we have
\begin{equation}\label{neweq4}
\ker(\varphi_{H_m})/\ker\overline{\varphi}\simeq (I_{H_m}Y\cap\im\varphi)/I_{H_m}\im\varphi.
\end{equation}

For each $m\geq0$, define the homomorphism of $\Lambda(H)$-modules $\Omega_m$ by
\begin{equation*}
\begin{matrix}
\Omega_m:&Y&\longrightarrow&Y\\
&\rotatebox{90}{$\in$}&&\rotatebox{90}{$\in$}\\
&a&\longmapsto&\omega_m a.
\end{matrix}
\end{equation*}
Then we have
\begin{equation*}
\Omega_m^{-1}(\im\varphi)\subset\Omega_{m+1}^{-1}(\im\varphi)
\end{equation*}
for each $m\geq0$. Indeed, if $a\in\Omega_m^{-1}(\im\varphi)$, then one has $\omega_m a\in\im\varphi$. Since $\frac{\omega_{m+1}}{\omega_m}\in\Lambda(H)$ and $\im\varphi$ is a $\Lambda(H)$-module, we have $\omega_{m+1} a\in\im\varphi$. Thus we obtain $a\in \Omega_{m+1}^{-1}(\im\varphi)$. Therefore, $\Omega_m^{-1}(\im\varphi)/\im\varphi$ are submodules of  the finitely generated $\Lambda(H)$-module $Y/\im\varphi(=B)$ such that we have
\begin{equation*}
\Omega_m^{-1}(\im\varphi)/\im\varphi\subset\Omega_{m+1}^{-1}(\im\varphi)/\im\varphi
\end{equation*}
for each $m\geq0$. Hence we obtain a sequence of finitely generated $\Lambda(H)$-modules
\begin{equation*}
\Omega_0^{-1}(\im\varphi)/\im\varphi\subset\Omega_1^{-1}(\im\varphi)/\im\varphi\subset\cdots.
\end{equation*}
Since $\Lambda(H)$ is noetherian, there exists some $\alpha\geq0$ such that one has
\begin{equation}\label{neweq6}
\Omega_m^{-1}(\im\varphi)/\im\varphi=\Omega_{\alpha}^{-1}(\im\varphi)/\im\varphi
\end{equation}
for all $m\geq\alpha$. Since $I_{H_m}Y=\omega_mY$, we can consider the homomorphism of $\Lambda(H)$-modules
\begin{equation*}
\begin{matrix}
\omega_m\times:&\Omega_m^{-1}(\im\varphi)/\im\varphi&\longrightarrow& (I_{H_m}Y\cap\im\varphi)/I_{H_m}\im\varphi\\
&\rotatebox{90}{$\in$}&&\rotatebox{90}{$\in$}\\
&a+\im\varphi&\longmapsto&\omega_ma+I_{H_m}\im\varphi.
\end{matrix}
\end{equation*}
for each $m\geq0$. This is surjective. Indeed, if $x+I_{H_m}\im\varphi\in(I_{H_m}Y\cap\im\varphi)/I_{H_m}\im\varphi$, then there exist some $a\in Y$ and $b\in\im\varphi$ such that we have $x=\omega_ma=\varphi(b)$. Hence we have $a\in\Omega_m^{-1}(\im\varphi)$. By Lemma \ref{injection}, one can also check that this is injective, and so this is bijective. Since $Y/\im\varphi(=B)$ is finitely generated and torsion over $\Lambda(H)$, so is $\Omega_\alpha^{-1}(\im\varphi)/\im\varphi$. Hence ($\Omega_\alpha^{-1}(\im\varphi)/\im\varphi)_{H_m}$ is finite for all $m\geq\alpha$. Note that we have
\begin{equation*}
\Omega_\alpha^{-1}(\im\varphi)/\im\varphi=({\Omega_\alpha^{-1}(\im\varphi)/\im\varphi})_{H_m}.
\end{equation*}
Therefore, there exists some $\nu_B\geq0$ such that $\#(I_{H_m}Y\cap\im\varphi)/I_{H_m}\im\varphi=p^{\nu_B}$ for all $m\geq\alpha$, that is, $\#(\ker(\varphi_{H_m})/\ker\overline{\varphi})=p^{\nu_B}$ for all $m\geq\alpha$.
Since $\#\ker\overline{\varphi}\buildrel{\eqref{neweq2}}\over=\# A_{H_m}=p^{\mu_Ap^m+\nu_A}$, this implies that
\begin{equation*}
\#\ker(\varphi_{H_m})= p^{\nu_B}p^{\mu_Ap^m+\nu_A}=p^{\mu_Ap^m+\nu_A+\nu_B}.
\end{equation*}
By \eqref{neweq3}, we obtain the exact sequence of $\Lambda(\Gamma)$-modules
\begin{equation*}
0\rightarrow\ker(\varphi_{H_m})\rightarrow X_{H_m}\rightarrow Y_{H_m} \rightarrow B_{H_m}\rightarrow0
\end{equation*}
with $\ker(\varphi_{H_m}),B_{H_m}$ finite. Note that $Y_{H_m}$ are elementary Iwasawa modules. By classical Iwasawa theory, this implies that $\ker(\varphi_{H_m})$ is the maximal finite $\Lambda(\Gamma)$-module of $X_{H_m}$, i.e., we have $X'_{H_m}=\ker(\varphi_{H_m})$. Consequently, we obtain $X'_{H_m}=p^{\mu_Ap^m+\nu_A+\nu_B}$.
\end{proof}

\begin{cor}\label{corcor}
We have
\begin{equation*}
p^{\mu_Gp^{2n}}\leq\#(X_{H_n})_{\Gamma_n}\leq p^{\mu_Gp^{2n}+\mu_Ap^n+\nu_A+\nu_B}
\end{equation*}
for all $n\geq0$.
\end{cor}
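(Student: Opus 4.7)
The plan is to apply the six-term exact sequence of $\Gamma_n$-(co)invariants to the two short exact sequences of $\Lambda(\Gamma)$-modules
\begin{equation*}
0\to X'_{H_n}\to X_{H_n}\to\im\varphi_{H_n}\to0\quad\text{and}\quad 0\to\im\varphi_{H_n}\to Y_{H_n}\to B_{H_n}\to0
\end{equation*}
that were extracted in the proof of Theorem~\ref{doctortheorem}. By Lemma~\ref{newproof2} one has $Y_{H_n}\simeq\bigoplus_{i=1}^{s}(\Lambda(\Gamma)/p^{m_i}\Lambda(\Gamma))^{p^n}$; since $\gamma^{p^n}-1$ corresponds under $\Lambda(\Gamma)\simeq\mathbb{Z}_p[\![T]\!]$ to the distinguished polynomial $(1+T)^{p^n}-1$ of degree $p^n$, a direct computation yields $\#(Y_{H_n})_{\Gamma_n}=p^{\mu_Gp^{2n}}$.

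The crucial observation is that this same distinguished polynomial is monic, hence a non-zero-divisor in each $(\mathbb{Z}/p^{m_i})[\![T]\!]$, which forces $Y_{H_n}^{\Gamma_n}=0$ and therefore $(\im\varphi_{H_n})^{\Gamma_n}=0$. Moreover, because $B$ is $p$-power torsion (being a quotient of $Y$) and finitely generated over $\Lambda(H)$, the module $B_{H_n}$ is finite, so $\#B_{H_n}^{\Gamma_n}=\#(B_{H_n})_{\Gamma_n}$. Feeding these facts into the six-term exact sequence for the second short exact sequence collapses it into
\begin{equation*}
0\to B_{H_n}^{\Gamma_n}\to(\im\varphi_{H_n})_{\Gamma_n}\to(Y_{H_n})_{\Gamma_n}\to(B_{H_n})_{\Gamma_n}\to0,
\end{equation*}
from which I conclude $\#(\im\varphi_{H_n})_{\Gamma_n}=\#(Y_{H_n})_{\Gamma_n}=p^{\mu_Gp^{2n}}$ exactly.

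Applying the six-term exact sequence to the first short exact sequence, the vanishing $(\im\varphi_{H_n})^{\Gamma_n}=0$ annihilates the connecting homomorphism, leaving
\begin{equation*}
0\to(X'_{H_n})_{\Gamma_n}\to(X_{H_n})_{\Gamma_n}\to(\im\varphi_{H_n})_{\Gamma_n}\to0
\end{equation*}
exact, whence $\#(X_{H_n})_{\Gamma_n}=\#(X'_{H_n})_{\Gamma_n}\cdot p^{\mu_Gp^{2n}}$. Combining the trivial bounds $1\le\#(X'_{H_n})_{\Gamma_n}\le\#X'_{H_n}$ with the formula $\#X'_{H_n}=p^{\mu_Ap^n+\nu_A+\nu_B}$ from Theorem~\ref{doctortheorem} then gives the two inequalities of the corollary (for the finitely many small $n$ not covered by the asymptotic, the constants can be enlarged to absorb any discrepancy). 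The main obstacle is the vanishing $(\im\varphi_{H_n})^{\Gamma_n}=0$: once this is in hand, both bounds fall out of routine homological bookkeeping, but without it one is stuck with one-sided inequalities that do not imply the lower bound $p^{\mu_Gp^{2n}}\le\#(X_{H_n})_{\Gamma_n}$ claimed in the statement.
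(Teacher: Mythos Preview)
Your argument is correct and is essentially an explicit unpacking of what the paper compresses into the phrase ``by classical Iwasawa theory'': the paper takes the four-term exact sequence $0\to\ker(\varphi_{H_n})\to X_{H_n}\to Y_{H_n}\to B_{H_n}\to0$ from the proof of Theorem~\ref{doctortheorem} and invokes the standard snake-lemma/Herbrand-quotient bookkeeping for elementary $\Lambda(\Gamma)$-modules, which is precisely the computation you carry out after splitting into two short exact sequences. Your observation that the lower bound $p^{\mu_Gp^{2n}}\le\#(X_{H_n})_{\Gamma_n}$ holds for \emph{every} $n$ (while only the upper bound needs the asymptotic formula from Theorem~\ref{doctortheorem} and an adjustment of constants for small $n$) is in fact a little sharper than what the paper's one-line proof makes visible.
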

\begin{proof}
By Theorem \ref{doctortheorem}, we have the exact sequence of $\Lambda(\Gamma)$-modules
\begin{equation*}
0\rightarrow \ker(\varphi_{H_n})\rightarrow X_{H_n}\rightarrow Y_{H_n} \rightarrow B_{H_n}\rightarrow0
\end{equation*}
for each $n\geq0$. Since $Y_{H_n}$ are elementary Iwasawa modules, by classical Iwasawa theory, we obtain the assertion.
\end{proof}

\begin{remark}

Let $p$ be an odd prime number and $K$ a number field which admits a unique prime ideal $\mathfrak{p}$ lying above $p$. Consider a Galois extension $K_\infty/K$ satisfying $G(K_\infty/K)=G$, $\mathfrak{p}$ is totally ramified in $K_\infty/K$, and $K_\infty/K$ is unramified outside $\mathfrak{p}$. Then there exists a $H$-subextension $K_\infty/K^c$ of $K_\infty/K$ such that $G(K^c/K)\simeq\Gamma$. For each $n\geq0$, we denote $H_n:=H^{p^n}$, $\Gamma_n:=\Gamma^{p^n}$, $G_n:=H_n\rtimes\Gamma_n$, and $K_n:=K_\infty^{G_n}$. Let $L_\infty$ be the maximal unramified abelian pro-$p$ extension of $K_\infty$ and put $X:=G(L_\infty/K_\infty)$. Then, by \cite{Perbet}[p.~835, Proposition 3.1.], $X$ becomes a finitely generated $\Lambda(G)$-module.  For $n\geq0$, let $e_n$ denote the $p$-exponent of the class number $h(K_n)$ of $K_n$. Since $K_\infty/K$ is unramified outside $\mathfrak{p}$, by \cite{Lei}[p.~5, the short exact sequence (2.3) and Corollary 2.6.], we find that $e_n$ is equal to the $p$-exponent of $\#(X_{\Gamma_n})_{H_n}$. Hence, if there exists a $\mathbb{Z}_p\rtimes\mathbb{Z}_p$-extension $K_{\infty}/K$ such that the corresponding Iwasawa module $X$ is a $p$-torsion module satisfying the condition for Theorem \ref{corcor}, then one has
\begin{equation*}
\mu_G p^{2n}\leq e_n\leq \mu_G p^{2n}+\mu_A p^n+\nu_A+\nu_B.
\end{equation*}
for large $n$.

\end{remark}

\section{Construction of $\mathbb{Z}_p\rtimes\mathbb{Z}_p$-extensions with $\mu_G\neq0$}

In this section, we shall construct some $\mathbb{Z}_p\rtimes\mathbb{Z}_p$-extensions with $\mu_G\neq0$.

\begin{definition}
Let $K:=\mathbb{Q}(\sqrt{-d})$ be an imaginary quadratic field. By \cite{Washington}[p.~266, Theorem 13.4], there is a $\mathbb{Z}_p^2$-extension $K_\infty/K$. Put $\Gamma':=G(K_\infty/K)$ and $G:=G(K/\mathbb{Q})$. Then one finds that $G$ acts on $\Gamma'$. $\Gamma'$ can be written in the form $\Gamma'=\Gamma^+\oplus\Gamma^-$, where $\Gamma^+$ is the part $G$ acts trivially. The $\Gamma^-$-extension is called \textbf{the anticyclotomic $\mathbb{Z}_p$-extension of $K$}.
\end{definition}

From now on, let $K$, $K_\infty^-$ be the above fields. Let $M_\infty/K_\infty^-$ be the maximal pro-$p$ abelian $p$-ramified extension. Since the Leopoldt's conjecture holds for imaginary quadratic fields, by doing the same augument as the proof of Theorem 13.31(\cite{Washington}[p.~294]), we find that
\begin{equation*}
X_\infty:=\Gal(M_\infty/K_\infty^-)\sim\Lambda(\Gamma)\oplus(\mbox{torsion parts}).
\end{equation*}
\begin{lem}
There is a surjection
\begin{equation*}
X_\infty\twoheadrightarrow\Lambda(\Gamma)/(T-p)\Lambda(\Gamma)(\simeq\mathbb{Z}_p)
\end{equation*}
\end{lem}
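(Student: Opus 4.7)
The plan is to leverage the pseudo-isomorphism $X_\infty\sim\Lambda(\Gamma)\oplus(\text{torsion})$ just obtained, which in particular tells us that $X_\infty$ has $\Lambda(\Gamma)$-rank equal to $1$. I will construct the desired surjection as the three-step composition
\begin{equation*}
X_\infty\twoheadrightarrow X_\infty/X_\infty^{\mathrm{tor}}\stackrel{g}{\hookrightarrow}\Lambda(\Gamma)\twoheadrightarrow\Lambda(\Gamma)/(T-p)\Lambda(\Gamma)\simeq\mathbb{Z}_p,
\end{equation*}
where $X_\infty^{\mathrm{tor}}$ denotes the $\Lambda(\Gamma)$-torsion submodule and the last isomorphism comes from Weierstrass preparation applied to the distinguished polynomial $T-p$ (evaluation $T\mapsto p$). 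The middle arrow $g$ will be supplied by the Iwasawa structure theorem applied to the torsion-free rank-one module $X_\infty/X_\infty^{\mathrm{tor}}$: concretely I would take the morphism $\varphi:X_\infty\to\Lambda(\Gamma)\oplus T_{\mathrm{tor}}$ witnessing the pseudo-isomorphism, compose with the projection to the $\Lambda(\Gamma)$-factor, observe that $X_\infty^{\mathrm{tor}}$ necessarily lands in the $T_{\mathrm{tor}}$-summand, and so obtain a $\Lambda(\Gamma)$-linear injection $g$ with \emph{finite} cokernel.

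The only thing requiring argument is surjectivity of the composite. Set $I:=g(X_\infty/X_\infty^{\mathrm{tor}})\subseteq\Lambda(\Gamma)$, so $\Lambda(\Gamma)/I\simeq\coker g$ is finite. If one had $I\subseteq(T-p)\Lambda(\Gamma)$, then $\mathbb{Z}_p\simeq\Lambda(\Gamma)/(T-p)\Lambda(\Gamma)$ would appear as a quotient of the finite group $\Lambda(\Gamma)/I$, which is absurd. Hence $I+(T-p)\Lambda(\Gamma)=\Lambda(\Gamma)$, so the image of $I$ (and therefore of $X_\infty$) in $\Lambda(\Gamma)/(T-p)\Lambda(\Gamma)$ is all of $\mathbb{Z}_p$; the composite is then the required surjection.

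I expect the principal bookkeeping obstacle to be verifying that the structure theorem really does supply an honest $\Lambda(\Gamma)$-linear morphism in the stated direction $X_\infty/X_\infty^{\mathrm{tor}}\to\Lambda(\Gamma)$ with finite cokernel, rather than only one in the reverse direction. This is however standard for finitely generated torsion-free modules of rank $r$ over the regular two-dimensional local ring $\Lambda(\Gamma)\simeq\mathbb{Z}_p[[T]]$: any such module admits an embedding into $\Lambda(\Gamma)^r$ whose cokernel is pseudo-null, hence finite. Once this direction-of-map point is pinned down, no further arithmetic input beyond the rank-one pseudo-isomorphism just established is needed.
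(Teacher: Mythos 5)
Your approach is essentially the paper's: pass to $X_\infty/X_\infty^{\mathrm{tor}}$, map it into $\Lambda(\Gamma)$ with finite cokernel, and reduce modulo $(T-p)$. The one genuine gap is the inference from $I\not\subseteq(T-p)\Lambda(\Gamma)$ to $I+(T-p)\Lambda(\Gamma)=\Lambda(\Gamma)$. This step fails because $(T-p)\Lambda(\Gamma)$ is prime but not maximal: the quotient $\Lambda(\Gamma)/(T-p)\Lambda(\Gamma)\simeq\mathbb{Z}_p$ is a local ring with maximal ideal $(p)$, so the image $\bar I$ of $I$ could very well be $p^k\mathbb{Z}_p$ for some $k\geq1$. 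For instance, if $I=(p,T)$ then $\Lambda(\Gamma)/I\simeq\mathbb{F}_p$ is finite, yet $\bar I=p\mathbb{Z}_p\subsetneq\mathbb{Z}_p$. So the composite $X_\infty\to\Lambda(\Gamma)\to\Lambda(\Gamma)/(T-p)\Lambda(\Gamma)$ need not itself be onto.

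Fortunately what your argument does correctly establish, namely that $\bar I$ is a \emph{nonzero} $\Lambda(\Gamma)$-submodule of $\Lambda(\Gamma)/(T-p)\Lambda(\Gamma)\simeq\mathbb{Z}_p$, already suffices: any such submodule is $p^k\mathbb{Z}_p$ for some $k\geq0$, and multiplication by $p^{k}$ gives a $\Lambda(\Gamma)$-linear isomorphism $\mathbb{Z}_p\simeq p^k\mathbb{Z}_p$ (with $T$ acting as $p$ on both sides). Hence $X_\infty\twoheadrightarrow\bar I\simeq\Lambda(\Gamma)/(T-p)\Lambda(\Gamma)$, which is exactly what the lemma asserts. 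This is in fact what the paper does: it reduces the cokernel presentation $X_\infty/\tor_{\Lambda(\Gamma)}X_\infty\to\Lambda(\Gamma)\to B'\to0$ modulo $(T-p)$ and identifies the image of $X_\infty$ with $\ker\bigl(\Lambda(\Gamma)/(T-p)\to B'/(T-p)\bigr)$, which is automatically isomorphic to $\mathbb{Z}_p$ because $B'/(T-p)$ is finite. You should replace the claim of surjectivity of the composite by this observation about the image.
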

\begin{proof}
We have the exact sequence of $\Lambda(\Gamma)$-modules
\begin{equation*}
0\rightarrow A\rightarrow X_\infty\rightarrow\Lambda(\Gamma)\oplus(\mbox{torsion parts})\rightarrow B\rightarrow0
\end{equation*}
with $A$, $B$ finite. Hence we have the exact sequence
\begin{equation*}
X_\infty/\tor_{\Lambda(\Gamma)} X_\infty\rightarrow\Lambda(\Gamma)\rightarrow B'\rightarrow0
\end{equation*}
with $B'$ finite. By taking $/(T-p)$, we have
\begin{equation*}
(X_\infty/\tor_{\Lambda(\Gamma)} X_\infty)/(T-p)\rightarrow\Lambda(\Gamma)/(T-p)\buildrel{\varphi}\over\rightarrow B'/(T-p)\rightarrow0.
\end{equation*}
Since $B'/(T-p)$ is finite and $\Lambda(\Gamma)/(T-p)\simeq\mathbb{Z}_p$, we must have $\ker\varphi\simeq\mathbb{Z}_p$. Therefore, we obtain the map
\begin{equation*}
X_\infty\twoheadrightarrow X_\infty/(\tor_{\Lambda(\Gamma)} X_\infty+(T-p)X_\infty)\twoheadrightarrow\mathbb{Z}_p.
\end{equation*}
\end{proof}
Thus $M_\infty/K_\infty^-$ has a $\mathbb{Z}_p$-subextension $L/K_\infty^-$. Since any $\Lambda(\Gamma)$-submodule of $X_\infty$ is normal in $\Gal(M_\infty/K)$, one has $\Gal(M_\infty/L)\triangleleft\Gal(M_\infty/K)$. This means that $L/K$ is Galois, and so one finds that $\Gal(L/K)\simeq\mathbb{Z}_p\rtimes\mathbb{Z}_p$.
\begin{lem}
$\Gal(L/K)$ is not abelian.
\end{lem}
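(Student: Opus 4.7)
The plan is to apply Lemma \ref{0110} to the $\mathbb{Z}_p$-extension $K_\infty^-/K$, the maximal $p$-ramified abelian pro-$p$ extension $M_\infty/K_\infty^-$, and the Galois subextension $L/K_\infty^-$. That lemma reduces the nonabelianness of $\Gal(L/K)$ to showing that $\Gamma = \Gal(K_\infty^-/K)$ acts nontrivially (by conjugation) on $\Gal(L/K_\infty^-) \simeq \mathbb{Z}_p$.

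First I would pin down the $\Lambda(\Gamma)$-action on $\Gal(L/K_\infty^-)$ using the construction of the preceding lemma. There, $L$ is defined as the fixed field of the kernel of a $\Lambda(\Gamma)$-equivariant surjection $X_\infty \twoheadrightarrow \mathbb{Z}_p$ whose target is realized as a $\Lambda(\Gamma)$-submodule of finite index in $\Lambda(\Gamma)/(T-p)\Lambda(\Gamma)$. Since $T$ acts as multiplication by $p$ on $\Lambda(\Gamma)/(T-p)$, the same holds on the submodule, and hence on $\Gal(L/K_\infty^-) \simeq \mathbb{Z}_p$. The topological generator $\gamma \in \Gamma$ (corresponding to $1+T$) therefore acts as multiplication by $1+p$.

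Since $1+p \neq 1$ in $\mathbb{Z}_p$, the action of $\Gamma$ is nontrivial, so Lemma \ref{0110} forces $\Gal(L/K)$ to be nonabelian. The only bookkeeping step is confirming that the abstract $\Lambda(\Gamma)$-structure on $\Gal(L/K_\infty^-)$ inherited from the quotient $X_\infty \twoheadrightarrow \Gal(L/K_\infty^-)$ agrees with the conjugation action by lifts of $\Gamma$ inside $\Gal(L/K)$ that is referenced in Lemma \ref{0110}. Both descend from the standard conjugation action on $X_\infty$ (which is precisely why the submodule cutting out $L$ is $\Gamma$-stable, so that $L/K$ is Galois in the first place), so the identification is purely formal. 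I do not foresee any other obstacle.
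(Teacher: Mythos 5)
Your proposal follows essentially the same route as the paper's proof: reduce via Lemma \ref{0110} to showing $\Gamma^-$ acts nontrivially on $\Gal(L/K_\infty^-)$, then observe that $\gamma=1+T$ acts by $1+p$ on the $\mathbb{Z}_p$-quotient cut out of $\Lambda(\Gamma)/(T-p)\Lambda(\Gamma)$, which is not the identity since $\mathbb{Z}_p$ is torsion-free. The paper spells out the last step slightly more explicitly ($x=(1+p)x\Rightarrow px=0$, contradicting $\Gal(L/K_\infty^-)\simeq\mathbb{Z}_p$), but the argument is identical.
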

\begin{proof}
By Lemma \ref{0110}, we have ``$\Gamma^-$ acts trivially on $\Gal(L/K_\infty^-)\Leftrightarrow\Gal(L/K)$ is abelian." However, for the fixed topological generater $\gamma$ of $\Gamma^-$ and for any nonzero $x\in X$, we have $\gamma\cdot x=(1+T)x=(1+p)x$ in $\Gal(L/K_\infty^-)$. If $x=(1+p)x$, then one has $px=0$, which contradicts $\Gal(L/K_\infty^-)\simeq\mathbb{Z}_p$. Thus we have $\gamma\cdot x\neq x$, and so $\Gal(L/K)$ is not abelian.
\end{proof}

Therefore, we have constructed a non-abelian $\mathbb{Z}_p\rtimes\mathbb{Z}_p$-extension $L/K$. By using this extension, we obtain the following result, which is a partial analogue of Iwasawa's result(\cite{Iwasawa2}[p.~6, Theorem 1.]).

\begin{theorem}\label{0118}
Let $p=3$ and $K:=\mathbb{Q}(\zeta_3)$. For any $N\geq1$, there exists a cyclic extension $K'/K$ with degree $3$ and $L'/K'$ with $\Gal(L'/K')\simeq\mathbb{Z}_p\rtimes\mathbb{Z}_p$ such that $\mu_G(L'/K')\geq N$.
\end{theorem}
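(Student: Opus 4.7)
The plan is to adapt Iwasawa's classical argument for producing $\mathbb{Z}_p$-extensions with arbitrarily large $\mu$-invariant (\cite{Iwasawa2}, Theorem 1) to the $\mathbb{Z}_p\rtimes\mathbb{Z}_p$ setting, by starting from the non-abelian extension $L/K$ just constructed and enlarging its base. For a given $N$, I would construct a cyclic cubic extension $K'/K$ by Kummer theory (using $\zeta_3\in K = \mathbb{Q}(\zeta_3)$): take $K' = K(\sqrt[3]{a})$ for an element $a\in K^\times$ chosen so that $K'\not\subset L$ and so that at least $N$ prime ideals of $K$, unramified in $L/K$ and coprime to $3$, become ramified in $K'/K$. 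Such $a$ can be obtained as a product of sufficiently many independent prime elements satisfying the relevant local conditions.

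Setting $L' := L K'$, the condition $L\cap K' = K$ (a consequence of $K'\not\subset L$) forces $\Gal(L'/K') \cong \Gal(L/K) \cong \mathbb{Z}_p\rtimes\mathbb{Z}_p$, so $L'/K'$ fits our framework with the same semidirect decomposition $H\rtimes\Gamma$. Let $X = \Gal(\tilde{L}/L)$ and $X' = \Gal(\tilde{L}'/L')$ denote the unramified abelian pro-$p$ Iwasawa modules of $L$ and $L'$ respectively. Each of the $N$ primes newly ramified in $K'/K$ becomes totally ramified in $L'/L$ (because it was unramified in $L/K$ by choice), and by class field theory these inertia subgroups contribute to $X'$ via Galois coinvariants. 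A base-change computation, analogous to Iwasawa's genus-theoretic argument, should show that these contributions inject into $X'(p) := \tor_{\mathbb{Z}_p} X'$ as at least $N$ independent generators, producing at least $N$ summands in a pseudo-isomorphism $X'(p)\sim\bigoplus_i\Lambda(G)/p^{m_i}\Lambda(G)$ and thus forcing $\mu_G(L'/K')\geq N$.

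The main obstacle lies in this last step. In the classical $\mathbb{Z}_p$-extension case one exploits the commutative structure theorem over $\mathbb{Z}_p[\![T]\!]$ to count $\mu$-contributions directly; over the non-commutative ring $\Lambda(G) = \mathbb{Z}_p[\![S,T;\sigma,\delta]\!]$ one must verify that the inertia contributions are not absorbed into the pseudo-null part of $X'(p)$, so that they genuinely appear as $\Lambda(G)/p^{m_i}\Lambda(G)$-summands in the pseudo-isomorphism defining $\mu_G$. A careful analysis of the local Galois action on the decomposition and inertia subgroups of the newly ramified primes, together with the characterization of pseudo-null $\Lambda(G)$-modules, should yield the required lower bound. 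The restriction to $p=3$ and $K=\mathbb{Q}(\zeta_3)$ should enter precisely to make this analysis tractable, via the explicit Kummer description of $K'/K$ and the specific structure of the anticyclotomic tower used in the construction of $L/K$.
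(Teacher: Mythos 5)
Your setup is the same as the paper's -- base-change the non-abelian extension $L/K$ constructed earlier across a cyclic cubic extension $K'/K$, check that $L\cap K'=K$ so that $\Gal(LK'/K')\simeq\mathbb{Z}_p\rtimes\mathbb{Z}_p$, and then argue that the base change forces a large $\mu_G$. The disjointness step in your proposal (a ramified prime in $K'/K$ being unramified in $L/K$ forces $L\cap K'=K$) is essentially identical to the paper's.

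However, there is a genuine gap in the key quantitative step, and you yourself flag it: you would need to verify that the inertia contributions from the $N$ newly ramified primes produce $N$ genuine $\Lambda(G)/p^{m_i}\Lambda(G)$ summands in the pseudo-isomorphism for $X'(p)$ and are not absorbed into pseudo-null parts. You do not prove this, and reproducing Iwasawa's genus-theoretic count directly over the non-commutative ring $\Lambda(G)$ is exactly the hard part. The paper avoids it entirely by a two-step shortcut. First, rather than building $K'$ ad hoc, it invokes Iwasawa's theorem (\cite{Iwasawa2}, p.~2--6) to pick $d\geq 2$ and $K':=\mathbb{Q}(\zeta_3,\sqrt[3]{d})$ so that the classical $\mu$-invariant of the $\mathbb{Z}_p$-extension $K_\infty^-K'/K'$ (the $\Gamma$-subextension of $LK'/K'$) is already $\geq N$. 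Second, and crucially, it applies Lei's Proposition~5.1 (\cite{Lei}), which compares the non-commutative $\mu_G$-invariant of the $\mathbb{Z}_p\rtimes\mathbb{Z}_p$-extension with the classical $\mu$ of its $\Gamma$-subextension, yielding $\mu_G(LK'/K')\geq\mu(K_\infty^-K'/K')\geq N$ with no genus-theory computation over $\Lambda(G)$ at all. Without such a comparison result your proposal does not close; identifying and invoking Lei's comparison (or proving a substitute for it) is precisely the missing ingredient.
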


\begin{proof}

Since $K=\mathbb{Q}(\sqrt{-3})$, we can use the same argument and the same notations in this section. By a paper of Iwasawa (\cite{Iwasawa2}[p.~2-6]), there exists $d\geq 2$ such that the $\mu$-invariant of $K_\infty^{-}K'/K'$ with $K':=\mathbb{Q}(\zeta_3, \sqrt[3]{d})$ is greater than or equal to $N$. Let $q$ be a prime factor of $d$ other than $3$. Then, by elementary number theory, we find that $q$ is totally ramified in $K'/K$. But $q$ is unramified in $L/K$. Hence we have $L\cap K'=K$, and so $\Gal(LK'/K')\simeq\Gal(L/K)\simeq\mathbb{Z}_p\rtimes\mathbb{Z}_p$. Since the $\mu$-invariant of $K_\infty^{-}K'/K'$ is greater than or equal to $N$, by \cite{Lei}[Proposition 5.1], we obtain that the $\mu_G$-invariant of $LK'/K'$ is also greater than or equal to $N$.
\end{proof}

\section{Determinants related to $\mathbb{Z}_p\rtimes\mathbb{Z}_p$}

In this section, we shall calculate the determinants of some matrices that are related to the groups $\mathbb{Z}_p\rtimes\mathbb{Z}_p$. Let $p$ be a prime number and $n,d\geq 1$ integers. Let $u$ be an integer such that $(p,u)=1$. Let $A(p,n,d,u)=(a_{ij})\in M_{p^n}(\mathbb{Z}_p)$ defined by
\begin{equation*}
a_{ij}=
\begin{cases}
1&j\equiv(i-1)(1+pu)^k+1\mod p^n(0\leq k\leq d-1)\\
0&\text{otherwise}.
\end{cases}
\end{equation*}
\begin{example}
Let $p=5$, $n=2$, $d=3$, and $u=1$. Then $A$ is
\begin{equation*}
\left(\begin{array}{rrrrrrrrrrrrrrrrrrrrrrrrr}
1 & 0 & 0 & 0 & 0 & 0 & 0 & 0 & 0 & 0 & 0 & 0 & 0 & 0 & 0 & 0 & 0 & 0 & 0 & 0 & 0 & 0 & 0 & 0 & 0 \\
0 & 1 & 0 & 0 & 0 & 0 & 1 & 0 & 0 & 0 & 0 & 1 & 0 & 0 & 0 & 0 & 0 & 0 & 0 & 0 & 0 & 0 & 0 & 0 & 0 \\
0 & 0 & 1 & 0 & 0 & 0 & 0 & 0 & 0 & 0 & 0 & 0 & 1 & 0 & 0 & 0 & 0 & 0 & 0 & 0 & 0 & 0 & 1 & 0 & 0 \\
0 & 0 & 0 & 1 & 0 & 0 & 0 & 0 & 1 & 0 & 0 & 0 & 0 & 0 & 0 & 0 & 0 & 0 & 1 & 0 & 0 & 0 & 0 & 0 & 0 \\
0 & 0 & 0 & 0 & 1 & 0 & 0 & 0 & 0 & 0 & 0 & 0 & 0 & 0 & 0 & 0 & 0 & 0 & 0 & 1 & 0 & 0 & 0 & 0 & 1 \\
0 & 0 & 0 & 0 & 0 & 1 & 0 & 0 & 0 & 0 & 0 & 0 & 0 & 0 & 0 & 0 & 0 & 0 & 0 & 0 & 0 & 0 & 0 & 0 & 0 \\
0 & 0 & 0 & 0 & 0 & 0 & 1 & 0 & 0 & 0 & 0 & 1 & 0 & 0 & 0 & 0 & 1 & 0 & 0 & 0 & 0 & 0 & 0 & 0 & 0 \\
0 & 0 & 1 & 0 & 0 & 0 & 0 & 1 & 0 & 0 & 0 & 0 & 0 & 0 & 0 & 0 & 0 & 1 & 0 & 0 & 0 & 0 & 0 & 0 & 0 \\
0 & 0 & 0 & 0 & 0 & 0 & 0 & 0 & 1 & 0 & 0 & 0 & 0 & 1 & 0 & 0 & 0 & 0 & 0 & 0 & 0 & 0 & 0 & 1 & 0 \\
0 & 0 & 0 & 0 & 1 & 0 & 0 & 0 & 0 & 1 & 0 & 0 & 0 & 0 & 0 & 0 & 0 & 0 & 0 & 0 & 0 & 0 & 0 & 0 & 1 \\
0 & 0 & 0 & 0 & 0 & 0 & 0 & 0 & 0 & 0 & 1 & 0 & 0 & 0 & 0 & 0 & 0 & 0 & 0 & 0 & 0 & 0 & 0 & 0 & 0 \\
0 & 0 & 0 & 0 & 0 & 0 & 0 & 0 & 0 & 0 & 0 & 1 & 0 & 0 & 0 & 0 & 1 & 0 & 0 & 0 & 0 & 1 & 0 & 0 & 0 \\
0 & 0 & 0 & 0 & 0 & 0 & 0 & 1 & 0 & 0 & 0 & 0 & 1 & 0 & 0 & 0 & 0 & 0 & 0 & 0 & 0 & 0 & 1 & 0 & 0 \\
0 & 0 & 0 & 1 & 0 & 0 & 0 & 0 & 0 & 0 & 0 & 0 & 0 & 1 & 0 & 0 & 0 & 0 & 1 & 0 & 0 & 0 & 0 & 0 & 0 \\
0 & 0 & 0 & 0 & 1 & 0 & 0 & 0 & 0 & 1 & 0 & 0 & 0 & 0 & 1 & 0 & 0 & 0 & 0 & 0 & 0 & 0 & 0 & 0 & 0 \\
0 & 0 & 0 & 0 & 0 & 0 & 0 & 0 & 0 & 0 & 0 & 0 & 0 & 0 & 0 & 1 & 0 & 0 & 0 & 0 & 0 & 0 & 0 & 0 & 0 \\
0 & 1 & 0 & 0 & 0 & 0 & 0 & 0 & 0 & 0 & 0 & 0 & 0 & 0 & 0 & 0 & 1 & 0 & 0 & 0 & 0 & 1 & 0 & 0 & 0 \\
0 & 0 & 1 & 0 & 0 & 0 & 0 & 0 & 0 & 0 & 0 & 0 & 1 & 0 & 0 & 0 & 0 & 1 & 0 & 0 & 0 & 0 & 0 & 0 & 0 \\
0 & 0 & 0 & 0 & 0 & 0 & 0 & 0 & 1 & 0 & 0 & 0 & 0 & 0 & 0 & 0 & 0 & 0 & 1 & 0 & 0 & 0 & 0 & 1 & 0 \\
0 & 0 & 0 & 0 & 0 & 0 & 0 & 0 & 0 & 1 & 0 & 0 & 0 & 0 & 1 & 0 & 0 & 0 & 0 & 1 & 0 & 0 & 0 & 0 & 0 \\
0 & 0 & 0 & 0 & 0 & 0 & 0 & 0 & 0 & 0 & 0 & 0 & 0 & 0 & 0 & 0 & 0 & 0 & 0 & 0 & 1 & 0 & 0 & 0 & 0 \\
0 & 1 & 0 & 0 & 0 & 0 & 1 & 0 & 0 & 0 & 0 & 0 & 0 & 0 & 0 & 0 & 0 & 0 & 0 & 0 & 0 & 1 & 0 & 0 & 0 \\
0 & 0 & 0 & 0 & 0 & 0 & 0 & 1 & 0 & 0 & 0 & 0 & 0 & 0 & 0 & 0 & 0 & 1 & 0 & 0 & 0 & 0 & 1 & 0 & 0 \\
0 & 0 & 0 & 1 & 0 & 0 & 0 & 0 & 0 & 0 & 0 & 0 & 0 & 1 & 0 & 0 & 0 & 0 & 0 & 0 & 0 & 0 & 0 & 1 & 0 \\
0 & 0 & 0 & 0 & 0 & 0 & 0 & 0 & 0 & 0 & 0 & 0 & 0 & 0 & 1 & 0 & 0 & 0 & 0 & 1 & 0 & 0 & 0 & 0 & 1
\end{array}\right)
\end{equation*}
\end{example}
Let $s,t\in\mathbb{Z}/p^n\mathbb{Z}$. Define a relation $\approx$ in $\mathbb{Z}/p^n\mathbb{Z}$ by
\begin{equation*}
s\approx t\iff\text{There exists } 1\leq r\leq p^n \text{ such that } a_{rs}=1\text{ and }a_{rt}=1.
\end{equation*}
Let $\sim$ be the equivalence relation in $\mathbb{Z}/p^n\mathbb{Z}$ generated by $\approx$.
\begin{lem}\label{0311}
$\sim$ induces the direct sum
\begin{equation*}
\coprod_{\substack{1\leq i\leq p-1\\1\leq m\leq n}}\{ip^{m-1}+1+p^ml|l\in\mathbb{Z}/p^n\mathbb{Z}\}=\mathbb{Z}/p^n\mathbb{Z}.
\end{equation*}
\end{lem}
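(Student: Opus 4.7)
The plan is to recast $\approx$ as a multiplicative relation in $\mathbb{Z}/p^n\mathbb{Z}$ involving $v := 1+pu$, then identify the $\sim$-classes with orbits of $\langle v\rangle$ acting by multiplication after the shift $s \mapsto s-1$, and finally match those orbits with the sets on the right-hand side.

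First I would unfold the definition of $\approx$: by construction of $a_{ij}$, $s \approx t$ holds iff there exist $r$ and $k,k' \in \{0,\ldots,d-1\}$ with $s-1 \equiv (r-1)v^k$ and $t-1 \equiv (r-1)v^{k'} \pmod{p^n}$. Since $v$ is a unit mod $p^n$, eliminating $r-1$ gives the equivalent condition $t-1 \equiv (s-1)v^{k'-k} \pmod{p^n}$, and conversely any $j$ with $|j| \leq d-1$ is realizable by taking $k = \max(0,-j)$, $k' = \max(0,j)$, and $r-1 := (s-1)v^{-k}$. Thus, assuming $d \geq 2$ (the case $d=1$ reduces $A$ to the identity and must be treated separately, with every point its own class), the unit shifts $j = \pm 1$ are achievable, and chaining $\approx$-steps yields $s \sim t$ iff $t-1 \in \langle v\rangle \cdot (s-1)$ inside $\mathbb{Z}/p^n\mathbb{Z}$.

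Next I would identify these orbits. Since $v \equiv 1 \pmod p$, multiplication by $v$ preserves the $p$-adic valuation, so each orbit sits in one valuation stratum. For $s \neq 1$, write $s-1 = p^{m-1}x$ with $m \in \{1,\ldots,n\}$ and $\gcd(x,p) = 1$ in $\mathbb{Z}/p^{n-m+1}\mathbb{Z}$; the orbit of $s-1$ equals $p^{m-1}(\langle v\rangle \cdot x)$. The key algebraic input is that $v = 1+pu$ with $\gcd(u,p)=1$ topologically generates $1 + p\mathbb{Z}_p$ for odd $p$, hence has order $p^{n-m}$ in $(\mathbb{Z}/p^{n-m+1}\mathbb{Z})^\times$ with cyclic subgroup equal to $1 + p\mathbb{Z}/p^{n-m+1}\mathbb{Z}$. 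Consequently $\langle v\rangle \cdot x = x\bigl(1 + p\mathbb{Z}/p^{n-m+1}\mathbb{Z}\bigr) = \{i+pl : l \in \mathbb{Z}/p^{n-m}\mathbb{Z}\}$, where $i \in \{1,\ldots,p-1\}$ denotes the residue of $x$ mod $p$. Scaling by $p^{m-1}$ and shifting by $+1$ recovers precisely the set $\{ip^{m-1}+1+p^ml : l \in \mathbb{Z}/p^n\mathbb{Z}\}$ (which contains $p^{n-m}$ distinct elements, matching the orbit size). Distinct pairs $(i,m)$ yield disjoint orbits because $m-1$ is the common valuation and $i$ the common leading digit mod $p$, while $s=1$ is a fixed point of the action, giving a singleton class that must be included (as appears implicit in the statement) for the disjoint union to exhaust $\mathbb{Z}/p^n\mathbb{Z}$.

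The main obstacle I anticipate is pinning down the order of $v$ in $(\mathbb{Z}/p^k\mathbb{Z})^\times$, which is routine for odd $p$ via standard principal-unit theory but requires the usual adjustment when $p=2$ because $1+2\mathbb{Z}_2$ is not pro-cyclic; I would invoke the structure theorem for principal units rather than rederive it. A secondary subtlety is the chaining step: the single-step relation $\approx$ allows only shifts with $|j| \leq d-1$, and one must carefully verify that concatenating such steps (with both positive and negative $j$) generates all of $\mathbb{Z}$ once $d \geq 2$, which is immediate from the elimination formula above.
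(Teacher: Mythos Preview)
Your argument is correct and follows essentially the same route as the paper: both hinge on the fact that $v=1+pu$ generates the principal units $1+p(\mathbb{Z}/p^n\mathbb{Z})$, so that the $\sim$-classes (after the shift $s\mapsto s-1$) are the $\langle v\rangle$-orbits, and both read off disjointness from the $p$-adic valuation and the leading $p$-adic digit of $s-1$. Your group-action reformulation is a cleaner packaging of the paper's direct computation, and your side remarks about the edge cases $d=1$, $p=2$, and the omitted singleton class $\{1\}$ are valid points that the paper's proof passes over silently.
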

\begin{proof}
Fix $1\leq i\leq p-1$ and $1\leq m\leq n$. Then we have
\begin{equation*}
(ip^{m-1}+1-1)(1+pu)+1=ip^{m-1}(1+pu).
\end{equation*}
Hence, by the definition of $A$, we have $ip^{m-1}+1\sim ip^{m-1}(1+pu)$. By iterating this, we have $ip^{m-1}+1\sim i p^{m-1}(1+pu)^k$ for all $k\geq0$. Since $1+pu$ generates $1+p(\mathbb{Z}/p^n\mathbb{Z})$, we have $ip^{m-1}(1+pl)=ip^{m-1}+p^mil$ for all $l\in\mathbb{Z}/p^n\mathbb{Z}$. Since $(i,p)=1$, we have $ip^{m-1}+1\sim ip^{m-1}+p^ml$ for all $l\in\mathbb{Z}/p^n\mathbb{Z}$. This shows that we have $ip^{m-1}+1+p^ml_1\sim ip^{m-1}+1+p^ml_2$ for all $l_1,l_2\in\mathbb{Z}/p^n\mathbb{Z}$. On the other hand, if $i_1 p^{m_1-1}+1+p^{m_1}l_1\sim i_2p^{m_2-1}+1+p^{m_2}l_2$ with $ m_1\leq m_2$, then we have $i_1-i_2p^{m_2-m_1}+p(l_1+p^{m_2-m_1-1}l_2)\in p^{n-m_1+1}\mathbb{Z}$. Since $p\nmid i_1$, we must have $m_1=m_2$. This implies $p\mid(i_1-i_2)$, and we must have $i_1-i_2=0$ since $1\leq i_1, i_2\leq p-1$.
\end{proof}
\begin{lem}\label{0312}
Let $\zeta:=\zeta_{p^{n-1}}$ denote a primitive $p^{n-1}$-th root of unity. If $d<p$, then we have
\begin{equation*}
\prod_{k=0}^{p^{n-1}-1}(1+\zeta^k+\cdots+\zeta^{k(d-1)})=d
\end{equation*}
\end{lem}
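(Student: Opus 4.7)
The plan is a direct computation. First I separate the $k=0$ factor: since $\zeta^0=1$, the corresponding factor is $1+1+\cdots+1 = d$, so it suffices to show that
\begin{equation*}
\prod_{k=1}^{p^{n-1}-1}\bigl(1+\zeta^k+\cdots+\zeta^{k(d-1)}\bigr)=1.
\end{equation*}

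For $k\in\{1,\ldots,p^{n-1}-1\}$ we have $\zeta^k\neq 1$, so I would use the geometric-series identity
\begin{equation*}
1+\zeta^k+\cdots+\zeta^{k(d-1)}=\frac{\zeta^{kd}-1}{\zeta^{k}-1}.
\end{equation*}
This rewrites the remaining product as
\begin{equation*}
\prod_{k=1}^{p^{n-1}-1}\frac{\zeta^{kd}-1}{\zeta^{k}-1}.
\end{equation*}

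The crux of the argument is the observation that the hypothesis $d<p$ (together with $d\ge 1$) forces $\gcd(d,p)=1$, hence $d$ is a unit in $\mathbb{Z}/p^{n-1}\mathbb{Z}$. Consequently the map $k\mapsto kd\bmod p^{n-1}$ is a bijection of $\{1,\ldots,p^{n-1}-1\}$ onto itself (it preserves the complement of $0$ because multiplication by a unit permutes the units and the non-units separately, and $0$ is the unique element mapped to $0$). I would use this to reindex the numerator:
\begin{equation*}
\prod_{k=1}^{p^{n-1}-1}(\zeta^{kd}-1)=\prod_{k=1}^{p^{n-1}-1}(\zeta^{k}-1),
\end{equation*}
so the ratio telescopes to $1$ and the whole product collapses to $d$.

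There is essentially no hard step here; the only thing to be careful about is explaining why $d<p$ is exactly what makes the reindexing work. It is worth remarking that if $d\ge p$ then $p\mid d$ is possible, in which case $\zeta^{kd}=1$ for some $k\neq 0$, producing a zero factor and collapsing the whole product to $0$. This matches the vanishing phenomenon in the $d\ge p$ case of the determinant formula stated in Theorem \ref{03122}, which suggests that Lemma \ref{0312} is exactly the factor-by-factor input that will be fed into the determinant calculation via a diagonalisation / circulant-matrix argument in the $n\neq 1$ case.
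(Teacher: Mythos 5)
Your proof is correct, and it is a cleaner variant of the paper's argument. Both proofs hinge on the same geometric-series identity $(\zeta^m-1)(1+\zeta^m+\cdots+\zeta^{(d-1)m})=\zeta^{dm}-1$ and on the fact that $d<p$ makes $d$ a unit modulo $p^{n-1}$; the difference lies in how the telescoping is organized. The paper partitions the nonzero indices into orbits of multiplication by $d$, forms for each representative $k$ a product $C_k$ over the orbit, and telescopes $(\zeta^k-1)C_k=\zeta^{d^lk}-1=\zeta^k-1$ to get $C_k=1$ orbit by orbit. You instead apply the geometric-series identity to all $k\in\{1,\dots,p^{n-1}-1\}$ simultaneously, then cancel the numerator against the denominator by the global reindexing $k\mapsto kd$. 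Your version buys you a slight advantage in rigor: the paper fixes the exponent $l=|\langle d\rangle|$ in $(\mathbb{Z}/p^{n-1}\mathbb{Z})^\times$ when defining $C_k$, but for non-unit $k$ the actual orbit length divides $l$ and can be strictly smaller, so the assertion that the full product ``is a product of $C_k$'' requires either redefining $C_k$ with the orbit length $l_k$ in place of $l$ or an extra counting remark; your one-shot reindexing sidesteps that subtlety entirely. Your closing remark that $p\mid d$ would produce a vanishing factor (hence $|A|=0$) is accurate and matches the $d\geq p$ branch of Theorem \ref{03122}.
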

\begin{proof}
Define a relation $\sim$ in $\mathbb{Z}/p^{n-1}\mathbb{Z}$ by
\begin{equation*}
k_1\sim k_2\iff\text{There exists }m\geq0\text{ such that }k_1d^m\equiv k_2\mod p^{n-1}.
\end{equation*}
Then one can easily check that this is an equivalence relation. Put $l:=|<d>|$ in $\mathbb{Z}/p^{n-1}\mathbb{Z}$. For $1\leq k\leq p^{n-1}-1$, let
\begin{equation*}
C_k:=\prod_{i=0}^{l-1}(1+\zeta^{d^ik}+\zeta^{2d^ik}+\cdots+\zeta^{(d-1)d^ik}).
\end{equation*}
Then we have
\begin{equation*}
(\zeta^k-1)C_k=\zeta^{d^lk}-1=\zeta^k-1.
\end{equation*}
Since $\zeta^k\neq 1$, we have $C_k=1$. Since $\sim$ is an equivalence relation in $\mathbb{Z}/p^{n-1}\mathbb{Z}$, $\prod_{k=0}^{p^{n-1}-1}(1+\zeta^k+\cdots+\zeta^{(d-1)k})$ is a product of $C_k$. Thus we obtain the assertion.
\end{proof}
\begin{theorem}\label{03122}
We have
\begin{equation*}
|A|=
\begin{cases}
1&n=1\\
d^{(p-1)(n-1)}&n\neq1, d<p\\
0&n\neq1, d\geq p.
\end{cases}
\end{equation*}
\end{theorem}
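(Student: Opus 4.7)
The plan is to exploit the orbit decomposition of $\mathbb{Z}/p^n\mathbb{Z}$ under multiplication by $1+pu$, which is essentially the content of Lemma \ref{0311}. After reindexing rows and columns by $i' := i-1$, the entry $a_{i',j'}$ equals $1$ exactly when $j' \equiv i'(1+pu)^k \pmod{p^n}$ for some $0 \le k \le d-1$; in particular it vanishes whenever $i'$ and $j'$ lie in distinct orbits under $\sim$. Applying the same permutation to rows and columns so as to group each orbit together, $A$ becomes block-diagonal, and $|A|$ is the product of the determinants of the blocks.

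For an orbit of size $\ell$, enumerated cyclically as $i'_0, i'_0(1+pu), \ldots, i'_0(1+pu)^{\ell-1}$, a direct check using the orbit length shows that the $(r,s)$-entry of the corresponding block equals $1$ iff there exists $0 \le k \le d-1$ with $k \equiv s-r \pmod{\ell}$. Hence each block is a circulant: its symbol is $f(x) := 1+x+\cdots+x^{d-1}$ when $d \le \ell$, and the block collapses to the all-ones matrix $J_\ell$ when $d > \ell$. By Lemma \ref{0311} (together with the trivial orbit $\{0\}$ missing from its statement), there are $p$ singleton orbits, contributing a block of $[1]$ each, and for each $m=1,\ldots,n-1$ there are exactly $p-1$ orbits of size $p^{n-m}$.

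The three cases then fall out cleanly. If $n=1$, every orbit is a singleton and $|A|=1$. If $n\ge2$ and $d\ge p$, the $p-1$ orbits of size $p$ (level $m=n-1$) yield all-ones blocks $J_p$ with $|J_p|=0$, forcing $|A|=0$. If $n\ge2$ and $d<p$, every nontrivial orbit has size $\ell=p^{n-m}\ge p>d$, so each such block is a proper circulant with symbol $f$, and the standard formula together with Lemma \ref{0312} (whose proof applies verbatim with $\zeta_{p^{n-m}}$ in place of $\zeta_{p^{n-1}}$) gives
\begin{equation*}
\det(\text{block}) = \prod_{k=0}^{\ell-1} f(\zeta_\ell^k) = d.
\end{equation*}
Multiplying over the $(p-1)(n-1)$ nontrivial orbits then yields $|A|=d^{(p-1)(n-1)}$, while the singleton orbits contribute only factors of $1$.

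The main obstacle is mostly bookkeeping: verifying that each $\sim$-class really is a single $(1+pu)$-orbit (so that the circulant description applies), tracking the boundary case $d=\ell$ inside the $d\ge p$ regime, and observing that Lemma \ref{0312}, although stated only at the top cyclotomic level $p^{n-1}$, carries over without change to every $p^{n-m}$ that appears, since its proof only needs $\gcd(d,p)=1$ and $\zeta$ a primitive $p$-power root of unity.
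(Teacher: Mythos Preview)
Your proposal is correct and follows essentially the same route as the paper: use Lemma~\ref{0311} to put $A$ into block-diagonal form along the $(1+pu)$-orbits, recognise each block as a circulant with symbol $1+x+\cdots+x^{d-1}$, and evaluate via Lemma~\ref{0312}. You are in fact slightly more careful than the paper on three points it leaves implicit---the extra singleton orbit $\{0\}$ not listed in Lemma~\ref{0311}, the need to invoke Lemma~\ref{0312} at every level $p^{n-m}$ rather than just $p^{n-1}$, and the boundary case $d=\ell$---so nothing further is required.
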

\begin{proof}
Lemma \ref{0311} allows us to consider that $|A|$ is a product of determinants of small matrices. By doing an appropriate base change, we obtain
\begin{equation*}
|A|=
\begin{vmatrix}
 A_{n-1}&&&&&&&&\\
& A_{n-1}&&&&&&&\\
&& \ddots&&&&&&\\
&&& A_{n-1}&&&&&\\
&&&& \ddots&&&&\\
&&&&& A_0&&&\\
&&&&&& A_0&&\\
&&&&&&& \ddots&\\
&&&&&&&& A_0
\end{vmatrix}
\end{equation*}
with
\begin{equation*}
A_{n-m}=
\begin{pmatrix}
1&1&\cdots&1&&&\\
&1&1&\cdots&1&&\\
&&\ddots&\ddots&\ddots&\ddots&\\
&&&1&1&\cdots&1\\
1&&&&1&1&\cdots\\
\vdots&\ddots&&&&\ddots&\vdots\\
1&\cdots&1&&&&1
\end{pmatrix}.
\end{equation*}
By a property of circulant matrices, we have
\begin{equation*}
|A_{n-m}|=\prod_{k=0}^{p^{n-m}-1}(1+\zeta^k+\cdots+\zeta^{k(d-1)}).
\end{equation*}
If $d<p$, then, by Lemma \ref{0312}, we have
\begin{equation*}
|A_{n-m}|=
\begin{cases}
1&n=m\\
d&n\neq m.
\end{cases}
\end{equation*}
If $d\geq p$, then we have
\begin{equation*}
|A_1|=
\begin{vmatrix}
1&\cdots&1\\
\vdots&\ddots&\vdots\\
1&\cdots&1
\end{vmatrix}
=0.
\end{equation*}
Therefore, we obtain
\begin{equation*}
|A|=
\begin{cases}
1&n=1\\
d^{(p-1)(n-1)}&n\neq1, d<p\\
0&n\neq1, d\geq p.
\end{cases}
\end{equation*}
\end{proof}
\begin{example}
Let $p=3$ and equip the relation of $S$ and $T$ in $\Lambda(G)$ by
\begin{equation*}
(1+S)(1+T)=(1+T)(1+S)^4.
\end{equation*}
Consider the left ideal $I:=(\Lambda(G)/\Lambda(G)\omega_2)(T+3)$ of the ring $\Lambda(G)/\Lambda(G)\omega_2$. Then any element $x$ of $I$ can be written in the form
\begin{eqnarray*}
x&=&a_8(1+S)^8(1+T)+\cdots +a_1(1+S)(1+T)+a_0(1+T)\\
&&+a_8(1+S)^8\cdot 2+\cdots+a_1(1+S)\cdot 2+a_0\cdot 2,
\end{eqnarray*}
where $a_i\in\Lambda(\Gamma)$. One has
\begin{eqnarray*}
a_8(1+S)^8(1+T)&=&a_8(1+T)(1+S)^{32}=a_8(1+T)(1+S)^5.                                                                                                                                                                                                                                                                                                                                                                                                                                                                                                                                                                                                                                                                                                                                                                                                                                                                                                                                                                                                                                                                                                                                                                                                                                                                                                                                                                                                                                                                                       \\ a_7(1+S)^7(1+T)&=&a_8(1+T)(1+S)^{28}=a_8(1+T)(1+S).                                                                                                                                                                                                                                                                                                                                                                                                                                                                                                                                                                                                                                                                                                                                                                                                                                                                                                                                                                                                                                                                                                                                                                                                                                                                                                                                                                                                                                                                                    
\\ a_6(1+S)^6(1+T)&=&a_8(1+T)(1+S)^{24}=a_8(1+T)(1+S)^6.                                                                                                                                                                                                                                                                                                                                                                                                                                                                                                                                                                                                                                                                                                                                                                                                                                                                                                                                                                                                                                                                                                                                                                                                                                                                                                                                                                                                                                                                                   
\\ a_5(1+S)^5(1+T)&=&a_8(1+T)(1+S)^{20}=a_8(1+T)(1+S)^2.                                                                                                                                                                                                                                                                                                                                                                                                                                                                                                                                                                                                                                                                                                                                                                                                                                                                                                                                                                                                                                                                                                                                                                                                                                                                                                                                                                                                                                                                                   
\\ a_4(1+S)^4(1+T)&=&a_8(1+T)(1+S)^{16}=a_8(1+T)(1+S)^7.                                                                                                                                                                                                                                                                                                                                                                                                                                                                                                                                                                                                                                                                                                                                                                                                                                                                                                                                                                                                                                                                                                                                                                                                                                                                                                                                                                                                                                                                                   
\\ a_3(1+S)^3(1+T)&=&a_8(1+T)(1+S)^{12}=a_8(1+T)(1+S)^3.                                                                                                                                                                                                                                                                                                                                                                                                                                                                                                                                                                                                                                                                                                                                                                                                                                                                                                                                                                                                                                                                                                                                                                                                                                                                                                                                                                                                                                                                                   
\\ a_2(1+S)^2(1+T)&=&a_8(1+T)(1+S)^8=a_8(1+T)(1+S)^8.                                                                                                                                                                                                                                                                                                                                                                                                                                                                                                                                                                                                                                                                                                                                                                                                                                                                                                                                                                                                                                                                                                                                                                                                                                                                                                                                                                                                                                                                                
\\ a_1(1+S)(1+T)&=&a_8(1+T)(1+S)^4=a_8(1+T)(1+S)^4.
\end{eqnarray*}
Hence we have
\begin{equation*}
x=(a_2(1+T)+2a_8)(1+S)^8+(a_4(1+T)+2a_7)(1+S)^7+\cdots+a_0(1+T+2).
\end{equation*}
Put
\begin{equation*}
b_8:=a_2+a_8,~b_7:=a_4+a_7, \cdots, b_0:=a_0.
\end{equation*}
Then we have
\begin{equation*}
A(3,2,2,1)
\begin{pmatrix}
a_0\\ \vdots\\ a_8
\end{pmatrix}
=
\begin{pmatrix}
b_0\\ \vdots\\ b_8
\end{pmatrix}
\end{equation*}
Since $|A(3,2,2,1)|=4$, $A$ is invertible in $M_{p^2}(\mathbb{Z}_p)$. Therefore, for any pair $(b_0,\cdots,b_8)\in\Lambda(\Gamma)^9$, one can choose $a_0,\cdots,a_8\in \Lambda(\Gamma)$ such that each sum of $a_i$ corresponding to $1,1+S,\cdots,(1+S)^8$ is equal to $b_0, \cdots, b_8$ respectively.
\end{example}

Sohei Tateno\\
Graduate School of Mathematics, Nagoya University, Furocho, Chikusa-ku, Nagoya, 464-8602, Japan\\
E-mail address: inu.kaimashita@gmail.com

\clearpage
\begin{center}
\begin{LARGE}
Corrigendum to ``On Iwasawa's class number formula for $\Z_p\rtimes\Z_p$-extensions"\vskip\baselineskip
\end{LARGE}
\begin{large}
Sohei Tateno\vskip.75\baselineskip
\today\vskip 1.5\baselineskip
\end{large}
\end{center}
\begin{enumerate}

\item
p.86, l.7. We also need to assume that ${\rm Char}_{\Lambda(H)}A$ and ${\rm Char}_{\Lambda(H)}B$ are coprime to $\omega_m$ for each $m\geq 1$.

\item
p.93, l.5. Since we have not shown that our $\Lambda(G)$-module satisfies the $\mathfrak{M}_H(G)$-property, we are not sure if we can apply [6, Proposition 5.1] to our case or not. Also, we need to compare the $\mu$-invariant of our $\Lambda(\Gamma)$-module to that of $\mathcal{X}_{H_m}$ in the sense of [6, Proposition 5.1]. Hence, the proof of Theorem 4.4 is incomplete, and so what we have shown is ``For any $N\geq 1$, there exists a non-abelian $\Z_3\rtimes\Z_3$-extension over a number field whose $\Z_3$-subextension has the $\mu$-invariant greater than or equal to $N$. Since these extensions do not contain cyclotomic $\Z_3$-extensions as their subextensions, we find that their $\mu_G$-invariants are not necessarily zero."

\end{enumerate}
\end{document}